\newtheorem{Prop}{Proposition}
\newtheorem{Lemma}{Lemma}
\newtheorem{Remark}{Remark}
\newtheorem{Cor}{Corollary}
\newcommand{\beq}{\begin{equation}}
\newcommand{\eeq}{\end{equation}}
\def\scalar(#1,#2){(#1\mid#2)}
\newcommand{\cd}{{\cal D}}
\newcommand{\ch}{{\cal H}}
\newcommand{\co}{{\cal O}}
\newcommand{\ycn}{(Y,{\cal C},\nu)}
\newcommand{\ov}{\overline}
\newcommand{\bs}{\mathbb{S}}
\newcommand{\R}{{\mathbb{R}}}
\newcommand{\T}{{\mathbb{T}}}
\newcommand{\C}{{\mathbb{C}}}
\newcommand{\Z}{{\mathbb{Z}}}
\newcommand{\N}{{\mathbb{N}}}
\newcommand{\va}{\varphi}
\newcommand{\mob}{\boldsymbol{\mu}}
\newcommand{\tend}[3][]{\xrightarrow[#2\to#3]{#1}}
\title{0-1 sequences of the Thue-Morse type and Sarnak's conjecture}
\author{El Houcein El Abdalaoui\and Stanis\l aw Kasjan \and Mariusz Lema\'nczyk\thanks{Research supported by Narodowe Centrum Nauki grant DEC-2011/03/B/ST1/00407}}
\begin{document}

\maketitle \normalsize

\thispagestyle{empty}

\begin{abstract}
We show that the images via $z\mapsto z^m$
of the continuous part of the spectral measures of the dynamical systems generated by the 0-1 sequences of the Thue-Morse type  are pairwise mutually singular  for different odd numbers $m\in\N$. Sarnak's conjecture on orthogonality with the
M\"obius function is shown to hold for such dynamical
systems.  The same conjecture is shown to hold for all systems induced by regular Toeplitz sequences. A non-regular Toeplitz sequence for which Sarnak's conjecture fails is constructed.
\end{abstract}

\section{Introduction} In this article we will deal with dynamical systems generated by generalized Morse sequences introduced to ergodic theory  by M.
Keane \cite{Ke} in 1968. Recall that a sequence $x\in\{0,1\}^{\N}$ is called a {\em generalized Morse sequence} if
\beq\label{defm}
x=b^0\times b^1\times \ldots\eeq
with $b^i\in\{0,1\}^{p_i}$, $p_i\geq2$, $b^i(0)=0$ for each $i\geq0$, where,
given blocks $B\in\{0,1\}^k$ and $C=C(0)C(1)\ldots C(\ell-1)\in\{0,1\}^{\ell}$, we set
$$
B\times C=B^{C(0)}B^{C(1)}\ldots B^{C(\ell-1)}$$
with $B^0=B$ and $B^1$ arises from $B$ by the interchange of $0$s and $1$s.

Consider now $X=\co(x)\subset\{0,1\}^{\Z}$ the subset of all two-sided sequences such that each block of consecutive symbols appearing in $y\in \co(x)$ also appears in $x$. Denote by $T:\{0,1\}^{\Z}\to\{0,1\}^{\Z}$ the shift, i.e.\ the homeomorphism that shifts a two-sided sequence of 0s and 1s  by one position to the left. Then $\co(x)$ is  closed and $T$-invariant. Under some mild assumptions \cite{Ke} on the blocks $b^0,b^1,\ldots$, one obtains a strictly ergodic dynamical system $(X,\mu_x,T)$, where $\mu_x$ is a unique $T$-invariant (Borel) probability measure (it is given by the average frequencies of blocks on $x$). From now on, only strictly ergodic case is considered.  ~\footnote{There are $y\neq z\in X$ such that $y(i)=z(i)=x(i)$ for each $i\geq 0$, therefore, the systems considered in this article are not distal, cf.\ \cite{Li-Sa}.}

We will consider
Sarnak's conjecture \cite{Sa} which
says that whenever $S$ is a zero (topological) entropy  homeomorphism of a compact metric space $Y$ then, for each $f\in C(Y)$ and $y\in Y$, we have
\beq\label{s1}\frac1N\sum_{n=1}^Nf(S^ny)\mob(n)\to0~\footnote{Following \cite{Sa}, when~(\ref{s1}) holds, we speak about the orthogonality of the sequence $(f(S^ny))$ with the M\"obius function.}, \eeq where
$\mob(\cdot)$ stands for the classical M\"obius function\footnote{$\mob(1)=1$, $\mob(n)=0$ for any non-square free number $n$, and $\mob(p_1p_2\ldots p_k)=(-1)^k$ for $p_1,p_2,\ldots,p_k$ different prime numbers.}. In numerous cases, this conjecture has recently been proved to hold: \cite{Ab-Le-Ru}, \cite{Bo}, \cite{Bo-Sa-Zi}, \cite{Gr-Ta}, \cite{Ku-Le}, \cite{Li-Sa}, \cite{Ma-Ri2} (see also \cite{Ta1}).


Consider first the simplest subclass of the class of generalized Morse sequences, namely, those sequences~(\ref{defm}) for which $|b^i|=2$ for all $i\geq0$ (in other words, either $b^i=01$ or $b^i=00$). Such sequences are called Kakutani sequences \cite{Kw}. A particular case of Sarnak's conjecture, namely:
\beq\label{kk}
\frac1N\sum_{n=1}^N(-1)^{x(n)}\mob(n)\to0,\eeq
for the classical Thue-Morse sequence $x=01\times01\times\ldots$ follows from \cite{In-Ka} and \cite{Ka} (see also \cite{Da-Te} where, additionally, the speed of convergence to zero is given and \cite{Ma-Ri1}, where, additionally, a PNT has been proved).
Then~(\ref{kk}) has been proved for some subclass of Kakutani sequences in \cite{Gr}. As a matter of fact, in~\cite{Gr},   the problem whether $\frac1N\sum_{n=1}^N(-1)^{s_E(n)}\mob(n)\to 0$ is considered. Here
$E\subset\N$ is fixed and $s_E(n):=\sum_{i\in E}n_i$, where $n=\sum_{i=0}^\infty n_i2^i$ ($n_i\in\{0,1\}$). To see a relationship with Kakutani sequences define a Kakutani sequence $x=b^0\times
b^1\times\ldots$ with $b^n=01$ iff $n+1\in E$; it is now not hard to see that $s_E(n)=x(n)$ mod~2 (for each $j\in\{0,1,\ldots,2^n-1\}$ consider its binary expansion which, by putting some zeros on the left if necessary, is of length~$n$; then the binary expansion of $j+2^n$ has length $n+1$ and begins by~$1$; it follows that $s_E(j+2^n)\neq s_E(j)$ modulo~$2$ (for all $j$ as above) iff $n+1\in E$).  Finally, using some methods from~\cite{Ma-Ri1}, Bourgain \cite{Bo0} completed the result from \cite{Gr} so that~(\ref{kk}) holds in the whole class of Kakutani sequences (moreover, in \cite{Bo0}, \cite{Gr}  a relevant PNT has been proved).
As a matter of fact, the methods used in the aforementioned papers allow us to have~(\ref{kk}) with $x$ replaced by every $y\in\co(x)$.

Coming back to dynamical systems induced by generalized Morse sequences, two natural problems can now be formulated: to deal with the full version of Sarnak's conjecture~(\ref{s1}) and to deal with blocks $b^i$ of arbitrary length. From this point of view, the present article can be seen as first steps to handle both these problems. Borrowing an idea from \cite{Ab-Le-Ru}, we consider Sarnak's conjecture~(\ref{s1}) in the class of dynamical systems given by those generalized Morse sequences that have the classical Thue-Morse sequence
as a stabilizing subsequence (in what follows, we call such sequences of being of Thue-Morse type). More precisely, we say that a sequence~(\ref{defm}) is {\em of Thue-Morse type} if for each $m\geq1$ there is $i_m$ such that
$$b^{i_m}=b^{i_m+1}=\ldots= b^{i_m+m-1}=01,$$ that is
$$
\begin{array}{lll}x&=&b^0\times\ldots\times b^{i_1-1}\times 01\times b^{i_1+1}\times\ldots \times b^{i_2-1}\times 01\times 01\times b^{i_2+2}\times\ldots\times\\ &&b^{i_3-1}\times01\times01\times01\times b^{i_3+3}\times\ldots\end{array}$$
with the blocks $b^r$ above of arbitrary length ($b^r(0)=0$). Notice that the intersection of the class of Thue-Morse type sequences with the class of Kakutani sequences corresponds to the sets $E\subset\N$  that contain intervals of consecutive integers of arbitrary lengths.

We now pass to a description of the content of the article. We  deal
with the problem of the mutual singularity of the images via the maps $\bs^1\ni z\mapsto z^m\in \bs^1$, $m\in\N$, of the continuous part of the spectral type of the dynamical systems $(\co(x),T,\mu_x)$ for a sequence~(\ref{defm}) of Thue-Morse type.  Although the spectral theory of
Morse dynamical systems is rather well-known \cite{Gu}, \cite{Kw}, the spectral problem formulated above does not seem to be taken up yet in the literature and seems to be of independent interest.
One of our main results  shows that if $\widetilde{\sigma}$ denotes the continuous part of the spectral type  of the dynamical system $(\co(x),T,\mu_x)$ arising from a Thue-Morse type sequence $x$, then $\widetilde\sigma^{(r)}\perp\widetilde\sigma^{(s)}$ whenever neither $r$ is a $2^k$-multiple (for some $k\geq0$) of $s$, nor vice versa; here, for $m\in\N$, $\widetilde\sigma^{(m)}$ stands for the image of $\widetilde\sigma$ via $z\mapsto z^m$. In particular, we have the mutual singularity of measures $\widetilde\sigma^{(r)}$ and $\widetilde\sigma^{(s)}$ whenever $r,s$ are two different odd numbers.

The idea to use Furstenberg's disjointness \cite{Gl} to cope with Sarnak's conjecture for a uniquely ergodic system $(X,T)$ has already appeared in \cite{Bo-Sa-Zi}. However, in some cases this idea  cannot  be applied directly (for example, if the systems under consideration are not weakly mixing, which is the case of the present article), the spectral approach does help, see recent  \cite{Ab-Le-Ru}, \cite{Bo}. This approach -- a mixture of spectral disjointness with a direct proof of the M\"obius orthogonality of the sequences corresponding to the discrete part of the spectrum --  will also be applied here.

The plan of the article is, firstly, to give a complete proof of the mutual spectral singularity result for the spectral measure of the classical Thue-Morse dynamical system $(X,T,\mu_x)$ itself via some analysis of the Fourier transform of the spectral measure and using a simple general singularity criterion. This will allow us to  reduce the problem of the validity of Sarnak's conjecture for $(X,T,\mu_x)$ to the same problem for some natural topological factor. This factor is determined by so called  Thue-Toeplitz sequence~\footnote{This factor is a particular uniquely ergodic model of the dyadic odometer.}. In order to complete the proof, we show that Sarnak's conjecture is true for all dynamical systems given by so called  regular Toeplitz sequences \cite{Ja-Ke}, \cite{Wi}, see Section~\ref{ostatnia}. This general result, in Section~\ref{ostatnia1}, enables us to conclude the validity of Sarnak's conjecture for all dynamical systems arising from the Thue-Morse type $0-1$-sequences. On the other hand, we construct a non-regular Toeplitz sequence (the entropy of it is positive but is not computed here) for which Sarnak's conjecture fails.

The reader should however be aware that our spectral approach to prove Sarnak's conjecture in the class of Morse dynamical systems may be insufficient. In view of a result of M.\ Guenais~\cite{Gu},  the problem whether there are generalized Morse sequences for which the continuous part of the spectral measure of the corresponding dynamical system  is Lebesgue is equivalent to the open classical problem of the existence of flat trigonometric polynomials with coefficients $\pm1$, i.e.\ we seek trigonometric polynomials $P_n(t)=\sum_{j=0}^{p_n-1}\epsilon_{j,n}e^{2\pi ijt}$ ($\epsilon_{j,n}=\pm1$) such that $\lim_{n\to\infty}\|P_n\|_{L^1}/\sqrt{p_n}=1$.

For necessary notions and facts from ergodic theory and spectral theory of dynamical systems we refer the reader to e.g.\ \cite{Gl}, \cite{Ka-Th}. For the theory of substitutions, also from the dynamical system point of view, see \cite{Qu}.

The authors would like to thank Christian Mauduit for fruitful discussions on the subject.

\section{Weighted operators and disjointness of spectral measures} Assume that $S$ is an ergodic automorphism of a standard probability space $\ycn$. Let $\varphi:Y\to\Z_2:=\{0,1\}$ be a measurable map (cocycle).  Denote by $W_{S,\va}$ the corresponding {\em weighted operator} on $L^2\ycn$:
\beq\label{wo1}
W_{S,\va}(\xi)(y)=(-1)^{\va(y)}\xi(Sy)\;\;\;\mbox{for}\;\;\xi\in L^2\ycn.\eeq
Then, for each $m\in\Z$
$$
\left(W_{S,\va}^m(\xi)\right)(y)=(-1)^{\va^{(m)}(y)}\xi(S^my),$$
where $\va^{(m)}(y)=\va(y)+\va(Sy)+\ldots+\va(S^{m-1}y)$ for $m\in
\Z$ positive and
$\va^{(m_1+m_2)}(y)=\va^{(m_1)}(y)+\va^{(m_2)}(S^{m_1}y)$ for each
$m_1,m_2\in\Z$ ($\va^{(0)}=0$).

The following observation is folklore (we provide a proof for completeness).

\begin{Prop}\label{pwo} Assume additionally that $S$ has discrete spectrum.  Assume moreover that $S^{q_n}\to Id$ on $L^2\ycn$ for some increasing sequence $q_n\to\infty$ of natural numbers. If for some $c\in\R$, $|c|\leq1$
\beq\label{wo2}
\int_Y(-1)^{\va^{(q_n)}}\,d\nu\to c\eeq
then $\left(W_{S,\va}\right)^{q_n}\to c\cdot Id$ weakly on $L^2\ycn$.
\end{Prop}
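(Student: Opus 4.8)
$\textbf{Proof plan.}$ The plan is to exploit the fact that $S$ has discrete spectrum, so that $L^2(Y,\mathcal C,\nu)$ is spanned (in the Hilbert space sense) by the eigenfunctions of $S$. Since the weak topology is metrizable on bounded sets and the operators $(W_{S,\varphi})^{q_n}$ are uniformly bounded (they are unitary, hence of norm $1$), it suffices to check convergence of the matrix coefficients $\langle (W_{S,\varphi})^{q_n}\xi,\eta\rangle$ on a linearly dense family, and for this I would take $\xi,\eta$ ranging over the eigenfunctions of $S$. Thus the whole statement reduces to showing that for each pair of eigenfunctions $\xi,\eta$ one has $\langle (W_{S,\varphi})^{q_n}\xi,\eta\rangle\to c\,\langle\xi,\eta\rangle$.

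First I would write out the matrix coefficient explicitly. If $\xi$ is an eigenfunction, $\xi\circ S^{q_n}=\lambda^{q_n}\xi$ for the corresponding eigenvalue $\lambda\in\mathbb S^1$, and since $S^{q_n}\to Id$ on $L^2$ we have $\lambda^{q_n}\to1$ for every eigenvalue of $S$ (this is exactly the spectral reformulation of $S^{q_n}\to Id$). Using the formula $(W_{S,\varphi}^{q_n}\xi)(y)=(-1)^{\varphi^{(q_n)}(y)}\xi(S^{q_n}y)$ I get
\[
\langle (W_{S,\varphi})^{q_n}\xi,\eta\rangle
=\int_Y(-1)^{\varphi^{(q_n)}(y)}\,\xi(S^{q_n}y)\,\overline{\eta(y)}\,d\nu(y)
=\lambda^{q_n}\int_Y(-1)^{\varphi^{(q_n)}(y)}\,\xi(y)\,\overline{\eta(y)}\,d\nu(y).
\]
Because $\lambda^{q_n}\to1$ and the integrals are uniformly bounded by $\|\xi\|_\infty\|\eta\|_\infty$ when $\xi,\eta$ are taken to be characters (bounded eigenfunctions), it remains to control the integral $\int_Y(-1)^{\varphi^{(q_n)}}\,\xi\overline\eta\,d\nu$.

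The main obstacle is therefore the following: the hypothesis (\ref{wo2}) only gives convergence of $\int_Y(-1)^{\varphi^{(q_n)}}\,d\nu$, i.e.\ the integral against the constant function $1$, whereas I need convergence against each product $\xi\overline\eta$ of eigenfunctions. To bridge this gap I would argue that $\varphi^{(q_n)}$ becomes asymptotically ``independent'' of the underlying dynamics along $q_n$: since $S^{q_n}\to Id$, the functions $(-1)^{\varphi^{(q_n)}}$, viewed as unimodular multipliers, should decouple from the eigenfunctions in the limit. Concretely, I expect that for a nontrivial character $\chi=\xi\overline\eta$ (so $\langle\xi,\eta\rangle=0$) one can show $\int_Y(-1)^{\varphi^{(q_n)}}\chi\,d\nu\to0$, while for $\chi\equiv1$ (the case $\xi=\eta$) the integral tends to $c$ by hypothesis; this would give exactly $\langle W^{q_n}\xi,\eta\rangle\to c\,\langle\xi,\eta\rangle$, which is the assertion $W^{q_n}\to c\cdot Id$ weakly. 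The cleanest way to establish the vanishing for nontrivial $\chi$ is to use that the values $(-1)^{\varphi^{(q_n)}}$ take only $\pm1$ together with an $L^2$-almost-invariance estimate: since $S^{q_n}\to Id$ strongly, $\chi\circ S^{q_n}\to\chi$, and one translates the convergence of the scalar integral in (\ref{wo2}) into convergence against all eigenfunctions by approximating $\chi$ through its near-invariance and a Cauchy--Schwarz/coboundary-type control of $\varphi^{(q_n)}$. This decoupling step is where the real content of the proposition lies; the reduction to eigenfunctions and the appearance of $\lambda^{q_n}\to1$ are routine.
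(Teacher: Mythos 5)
Your reduction to eigenfunctions is exactly the paper's first step: by unitarity and density it suffices to check $\langle (W_{S,\va})^{q_n}\eta_i,\eta_j\rangle\to c\,\langle\eta_i,\eta_j\rangle$ for an orthogonal basis of eigenfunctions, and the factor $\alpha_i^{q_n}\to1$ handles the diagonal terms together with hypothesis~(\ref{wo2}). But at the crucial off-diagonal step your proposal has a genuine gap: you state that the integrals $I_n:=\int_Y(-1)^{\va^{(q_n)}}\eta_i\ov{\eta}_j\,d\nu$ should vanish in the limit, concede that ``this decoupling step is where the real content of the proposition lies,'' and then offer only a heuristic (``asymptotic independence,'' an unspecified ``Cauchy--Schwarz/coboundary-type control''). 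No actual argument is given, and the heuristic itself points in the wrong direction: the mechanism is not that $(-1)^{\va^{(q_n)}}$ decouples from the eigenfunctions, and indeed hypothesis~(\ref{wo2}) alone gives you no quantitative control on $\va^{(q_n)}$ that a Cauchy--Schwarz estimate could use.

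The missing idea is a rotation trick based on the cocycle identity. By $S$-invariance of $\nu$,
\[
I_n=\int_Y(-1)^{\va^{(q_n)}\circ S}\,(\eta_i\ov{\eta}_j)\circ S\,d\nu
=\alpha_i\ov{\alpha}_j\int_Y(-1)^{\va^{(q_n)}\circ S-\va^{(q_n)}}(-1)^{\va^{(q_n)}}\eta_i\ov{\eta}_j\,d\nu,
\]
and the cocycle identity $\va^{(q_n+1)}=\va^{(q_n)}+\va\circ S^{q_n}=\va+\va^{(q_n)}\circ S$ gives $\va^{(q_n)}\circ S-\va^{(q_n)}=\va\circ S^{q_n}-\va$, which tends to $0$ in measure because $S^{q_n}\to Id$. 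Hence $I_n=\alpha_i\ov{\alpha}_j\,I_n+o(1)$, i.e.\ $(1-\alpha_i\ov{\alpha}_j)I_n\to0$. Ergodicity of $S$ (which you never invoke, but which is essential here) guarantees that distinct orthogonal eigenfunctions have distinct eigenvalues, so $\alpha_i\neq\alpha_j$ and therefore $I_n\to0$. This single identity is the entire content of the off-diagonal case; without it, or some substitute for it, your proof does not close.
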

\begin{proof} By assumption, $L^2\ycn$ is generated by eigenfunctions
$\eta_i\in L^2\ycn$ ($|\eta_i|=1$), $\eta_i\circ S=\alpha_i\cdot \eta_i$ for
 some $\alpha_i\in\C$ ($|\alpha_i|=1$), $i\geq1$. Now, for $i,j\geq1$ we have
$$
\big\langle\left(W_{S,\va}\right)^{q_n}\eta_i,\eta_j\big\rangle=\int_Y(-1)^
{\va^{(q_n)}}\cdot\eta_i\circ S^{q_n}\cdot\ov{\eta}_j\,d\nu$$
$$=\alpha_i^{q_n}\int_Y
(-1)^{\va^{(q_n)}}\cdot\eta_i\cdot\ov{\eta}_j\,d\nu\rightarrow
c\langle\eta_i,\eta_j\rangle$$ since $\alpha_i^{q_n}\to 1$ and for
$i\neq j$ we have
$$\int_Y (-1)^{\va^{(q_n)}}\cdot\eta_i\cdot\ov{\eta}_j\,d\nu=\int_Y (-1)^{\va^{(q_n)}\circ S}\cdot(\eta_i\cdot\ov{\eta}_j)\circ S\,d\nu$$
$$
=\alpha_i\cdot \ov{\alpha}_j\int_Y (-1)^{\va^{(q_n)}\circ S-\va^{(q_n)}} (-1)^{\va^{(q_n)}}\cdot\eta_i\cdot\ov{\eta}_j\,d\nu$$$$=
\alpha_i\cdot \ov{\alpha}_j\int_Y (-1)^{\va\circ S^{q_n}-\va} (-1)^{\va^{(q_n)}}\cdot\eta_i\cdot\ov{\eta}_j\,d\nu,$$
so $\int_Y (-1)^{\va^{(q_n)}}\cdot\eta_i\cdot\ov{\eta}_j\,d\nu\to 0$ as (by the ergodicity of $S$) $\alpha_i\neq\alpha_j$ and $(-1)^{\va\circ S^{q_n}-\va}\to 1$ in measure.\end{proof}

Given a measure $\sigma$ on $\bs^1$ denote by $\sigma^{(k)}$ the image of $\sigma$ via the map $z\mapsto z^k$.

Consider now $\xi={\mathbf 1}={\mathbf1}_Y$ and note that the Fourier transform of the spectral measure\footnote{Given a unitary operator $U$ of a Hilbert space $\ch$ and $\xi\in\ch$, the spectral measure $\sigma_\xi$ is determined by
$
\widehat{\sigma}_{\xi}(m):=\int_{\bs^1}z^m\,d\sigma_{\xi}(z)=\langle U^m\xi,\xi\rangle$ for all $m\in\Z$.} $\sigma_{\mathbf1}$ of $\mathbf1$ is given by
$$\widehat{\sigma}_{\mathbf 1}(m)=\int_Y(-1)^{\va^{(m)}}\,d\nu,\;\;m\in\Z.$$
Since weak convergence in Proposition~\ref{pwo} is preserved when passing to closed, invariant subspaces, we obtain the following result.

\begin{Cor}\label{cwo} Under the assumptions of Proposition~\ref{pwo}, if $r\neq s$ and for some $t\in\Z$ we have
\beq\label{wo3}
 \int_Y(-1)^{\va^{(rtq_n)}}\,d\nu\to c_1,\eeq
 \beq\label{wo4}
 \int_Y(-1)^{\va^{(stq_n)}}\,d\nu\to c_2\eeq
 with $c_1\neq c_2$, then
 \beq\label{wo5}
 \sigma^{(r)}_{\mathbf1}\perp\sigma^{(s)}_{\mathbf1}.\eeq
 \end{Cor}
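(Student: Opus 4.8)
The plan is to reinterpret both $\sigma^{(r)}_{\mathbf1}$ and $\sigma^{(s)}_{\mathbf1}$ as coming from one and the same sequence of powers of $W_{S,\va}$ acting on the cyclic subspace of $\mathbf1$, and then to invoke a weak-$*$ singularity criterion. First I would pass to the closed invariant subspace $Z(\mathbf1)\subset L^2\ycn$ generated by $\mathbf1$ under $W_{S,\va}$. On $Z(\mathbf1)$ the operator $W_{S,\va}$ is unitarily equivalent to multiplication by $z$ on $L^2(\bs^1,\sigma_{\mathbf1})$, with $\mathbf1$ corresponding to the constant function $1$; indeed $\langle W_{S,\va}^m\mathbf1,\mathbf1\rangle=\wh\sigma_{\mathbf1}(m)=\int_Y(-1)^{\va^{(m)}}\,d\nu$, which is exactly the formula recorded above.

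Since $S^{q_n}\to Id$ and $r,s,t$ are fixed, also $S^{rtq_n}\to Id$ and $S^{stq_n}\to Id$, so the proof of Proposition~\ref{pwo} applies verbatim to the sequences $(rtq_n)$ and $(stq_n)$ (it only uses $S^{(\cdot)}\to Id$ along the sequence together with the convergence of the integral). Combined with the hypotheses~(\ref{wo3}) and~(\ref{wo4}), this gives $W_{S,\va}^{rtq_n}\to c_1\cdot Id$ and $W_{S,\va}^{stq_n}\to c_2\cdot Id$ weakly on $L^2\ycn$. As weak convergence passes to the invariant subspace $Z(\mathbf1)$, on $L^2(\bs^1,\sigma_{\mathbf1})$ the multiplication operators by $z^{rtq_n}$ and by $z^{stq_n}$ converge weakly to $c_1\cdot Id$ and $c_2\cdot Id$. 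Writing an arbitrary $h\in L^1(\sigma_{\mathbf1})$ as $h=f\ov g$ with $f,g\in L^2(\sigma_{\mathbf1})$, this weak operator convergence upgrades to weak-$*$ convergence in $L^\infty(\sigma_{\mathbf1})$: $\int_{\bs^1}z^{rtq_n}h\,d\sigma_{\mathbf1}\to c_1\int_{\bs^1}h\,d\sigma_{\mathbf1}$ for all such $h$, and similarly for $s$ and $c_2$.

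Next I would transport these statements to the push-forwards. As $\sigma^{(r)}_{\mathbf1}$ is the image of $\sigma_{\mathbf1}$ under $z\mapsto z^r$, for $H\in L^1(\sigma^{(r)}_{\mathbf1})$ change of variables gives $\int_{\bs^1}w^{tq_n}H(w)\,d\sigma^{(r)}_{\mathbf1}(w)=\int_{\bs^1}z^{rtq_n}H(z^r)\,d\sigma_{\mathbf1}(z)$; since $z\mapsto H(z^r)$ lies in $L^1(\sigma_{\mathbf1})$, the previous step shows the right-hand side tends to $c_1\int_{\bs^1}H(z^r)\,d\sigma_{\mathbf1}(z)=c_1\int_{\bs^1}H\,d\sigma^{(r)}_{\mathbf1}$. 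Hence $w^{tq_n}\to c_1$ weak-$*$ in $L^\infty(\sigma^{(r)}_{\mathbf1})$, and likewise $w^{tq_n}\to c_2$ weak-$*$ in $L^\infty(\sigma^{(s)}_{\mathbf1})$.

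Finally I would close with the singularity criterion. If $\sigma^{(r)}_{\mathbf1}$ and $\sigma^{(s)}_{\mathbf1}$ were not mutually singular, then by Lebesgue decomposition there is a nonzero finite measure $\la$ with $\la\ll\sigma^{(r)}_{\mathbf1}$ and $\la\ll\sigma^{(s)}_{\mathbf1}$. Weak-$*$ convergence of $w^{tq_n}$ is inherited by any absolutely continuous subordinate measure (multiply the test function by the Radon--Nikodym derivative, which remains in the relevant $L^1$), so $w^{tq_n}\to c_1$ and $w^{tq_n}\to c_2$ weak-$*$ in $L^\infty(\la)$; testing against $1\in L^1(\la)$ forces $c_1\la(\bs^1)=c_2\la(\bs^1)$ with $\la(\bs^1)>0$, whence $c_1=c_2$, contradicting the hypothesis. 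This yields~(\ref{wo5}). The step to get right is precisely this last criterion: it is the \emph{full} weak-$*$ convergence, not merely $\wh\sigma_{\mathbf1}(rtq_n)\to c_1$ and $\wh\sigma_{\mathbf1}(stq_n)\to c_2$, that propagates to a common absolutely continuous component; the cyclic-subspace reduction is exactly what the sentence preceding the corollary licenses, and the push-forward bookkeeping is routine.
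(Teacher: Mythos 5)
Your proof is correct and takes essentially the same route as the paper: the paper's one-line justification (weak convergence in Proposition~\ref{pwo} is preserved on closed invariant subspaces, here the cyclic space of $\mathbf 1$) combined with a standard singularity criterion is exactly what you make explicit, including applying Proposition~\ref{pwo} along $(rtq_n)$ and $(stq_n)$ and pushing the weak-$*$ convergence forward to $\sigma^{(r)}_{\mathbf1}$ and $\sigma^{(s)}_{\mathbf1}$. The Lebesgue-decomposition step you end with is precisely the ``simple general singularity criterion'' the paper alludes to, so there is nothing to add.
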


 \section{Generalized Morse sequences and strongly $p$-multiplicative sequences}

Let
$$
x=b^0\times b^1\times \ldots
$$
be a generalized Morse sequence. Denote by $(\co(x),T,\mu_x)$ the corresponding Morse dynamical system.
Consider the function $f:\co(x)\to\R$,
\beq\label{ms1}
f(y)=(-1)^{y(0)},\;\;y\in \co(x).\eeq
Then $f$ is continuous, and the spectral measure $\sigma_f$ of $f$ under $T$ is determined by (see \cite{Kw}, \cite{Le})  \beq\label{ms2}
\widehat\sigma_f(m)=\int_{\co(x)}(-1)^{y(m)-y(0)}\,d\mu_x(y),\;\;\;m\in\Z.\eeq
Moreover, the dynamical system $(\co(x),\mu_x,T)$ has a representation as a skew product $S_\varphi$
over $S$ which is the $(q_n)$-odometer ($q_n=p_0\cdot\ldots\cdot p_{n-1}$, $S$ is defined on $\ycn$) and a cocycle
$\va:Y\to\Z_2$ \cite{Ke}, \cite{Le} (called in \cite{Le} a Morse cocycle). From the spectral point of view, in this representation, the function $f$ (given by~(\ref{ms1})) corresponds to the function ${\mathbf1}_Y$ for the weighted operator $W_{S,\va}$, in other words (see \cite{Le})
\beq\label{ms3}
\widehat{\sigma}_f(m)=\int_Y(-1)^{\va^{(m)}(y)}\,d\nu(y),\;\;m\in\Z.\eeq

A special case arises when $b^0=b^1=\ldots=B$. Then the corresponding (generalized) Morse sequence $x$ can be obtained as a fixed point for the following substitution (see \cite{Qu})
$$
0\mapsto B,\;\;1\mapsto \widetilde{B}.$$ In this particular case,
the sequence $m_n(x):=(-1)^{x(n)}$, where $x=B\times
B\times\ldots$, is {\em strongly $p$-multiplicative}, where $p$ is
the length of $B$, that is \beq\label{ms4}
m_{ap^n+b}(x)=m_a(x)\cdot m_b(x)\;\;\mbox{for
each}\;a,b\geq0,\;b<p^n.\eeq Each $p$-multiplicative sequence
$(m_n(x))$  determines its {\em spectral measure} $\sigma_x$
\cite{Qu} whose Fourier transform $\widehat{\sigma}(\cdot)$ is
given by \beq\label{ms5}
\widehat\sigma_x(k)=\lim_{N\to\infty}\frac1N\sum_{n=1}^N
m_{n+k}(x)\cdot m_n(x),\;\;k\in\Z.\eeq Since (by~(\ref{ms4}))
$$
\widehat\sigma_x(kp)=\lim_{N\to\infty}\frac1N\sum_{n=1}^Nm_{n+kp}(x)\cdot m_n(x)=$$
$$=\lim_{N\to\infty}\frac1N\sum_{n=1}^{[N/p]}\sum_{i=0}^{p-1}m_{pn+i+kp}(x)\cdot m_{pn+i}(x)=
$$$$
\lim_{N\to\infty}\frac1{[N/p]}\sum_{n=1}^{[N/p]}m_{n+k}(x)\cdot m_{n}(x)\cdot m_i(x)^2=\lim_{N\to\infty}\frac1{[N/p]}\sum_{n=1}^{[N/p]}m_{n+k}(x)\cdot m_{n}(x),$$
for each $k,n\geq0$
\beq\label{ms6}
\widehat\sigma_x(kp^n)=\widehat{\sigma}_x(k).\eeq

\begin{Cor}\label{cms1}
Assume that $x=B\times B\times\ldots$ Let $r\neq s$. If there exists $t\geq1$ such that
$c_1:=\widehat\sigma(tr)\neq c_2:=\widehat\sigma(ts)$, then $\sigma_x^{(r)}\perp \sigma_x^{(s)}$.\end{Cor}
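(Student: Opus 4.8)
The plan is to deduce the corollary directly from Corollary~\ref{cwo}, applied to the weighted operator $W_{S,\va}$ attached to the Morse system. First I would recall the identifications already set up in the excerpt: for $x=B\times B\times\ldots$ with $|B|=p$ all the blocks coincide, so $p_i=p$ for every $i$ and the underlying odometer $S$ in the skew-product representation is the $(q_n)$-odometer with $q_n=p_0\cdot\ldots\cdot p_{n-1}=p^n$. Moreover, the spectral measure $\sigma_x$ coming from the $p$-multiplicative theory is nothing but the spectral measure $\sigma_f=\sigma_{\mathbf1}$ of $f={\mathbf1}_Y$ under $W_{S,\va}$, since by~(\ref{ms3}) and~(\ref{ms5}) their Fourier transforms agree: $\widehat\sigma_x(m)=\int_Y(-1)^{\va^{(m)}}\,d\nu$. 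Hence it is enough to establish $\sigma_{\mathbf1}^{(r)}\perp\sigma_{\mathbf1}^{(s)}$.

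Next I would check that the hypotheses of Proposition~\ref{pwo}, and therefore of Corollary~\ref{cwo}, are met along the sequence $q_n=p^n$. The odometer $S$ has discrete spectrum and satisfies $S^{q_n}=S^{p^n}\to Id$ on $L^2\ycn$, both being standard properties of odometers. To verify~(\ref{wo2}) I would invoke the periodicity relation~(\ref{ms6}): taking $k=1$ gives $\int_Y(-1)^{\va^{(q_n)}}\,d\nu=\widehat\sigma_x(p^n)=\widehat\sigma_x(1)$, so this sequence is in fact constant and converges trivially to $c=\widehat\sigma_x(1)$, with $|c|\leq1$ because $\sigma_x$ is a probability measure (as $|f|=1$).

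The heart of the matter is then a second use of~(\ref{ms6}) on the scaled sequences. For the given $t\geq1$, I would compute, via~(\ref{ms3}) followed by~(\ref{ms6}) with $k=rt$ and $k=st$,
\beq
\int_Y(-1)^{\va^{(rtq_n)}}\,d\nu=\widehat\sigma_x(rt\,p^n)=\widehat\sigma_x(rt)=\widehat\sigma(tr)=c_1,
\eeq
\beq
\int_Y(-1)^{\va^{(stq_n)}}\,d\nu=\widehat\sigma_x(st\,p^n)=\widehat\sigma_x(st)=\widehat\sigma(ts)=c_2.
\eeq
Both sequences are constant in $n$, so conditions~(\ref{wo3}) and~(\ref{wo4}) hold with exactly the stated limits $c_1,c_2$. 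Since $c_1\neq c_2$ by hypothesis, Corollary~\ref{cwo} gives $\sigma_{\mathbf1}^{(r)}\perp\sigma_{\mathbf1}^{(s)}$, which is precisely $\sigma_x^{(r)}\perp\sigma_x^{(s)}$.

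I expect the only genuine obstacle to be conceptual rather than computational: one must be confident that the invariance~(\ref{ms6}), phrased for the $p$-multiplicative measure $\sigma_x$, legitimately transfers to the integrals $\int_Y(-1)^{\va^{(m)}}\,d\nu$ occurring in Corollary~\ref{cwo}. This rests on the identity $\widehat\sigma_x(m)=\int_Y(-1)^{\va^{(m)}}\,d\nu$, i.e.\ on the coincidence of the three descriptions of the spectral measure. Once this is granted, the multiplicative period $q_n=p^n$ of the odometer matches exactly the $p^n$-invariance of $\widehat\sigma_x$, which is what renders all the relevant sequences constant and makes the hypotheses of Corollary~\ref{cwo} automatic.
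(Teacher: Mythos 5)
Your proposal is correct and follows exactly the paper's own route: the paper proves this corollary by citing the identification $\sigma_f=\sigma_x$ (which it attributes to \cite{Le}, as well-known) and then invoking Corollary~\ref{cwo} with $q_n=p^n$ together with (\ref{ms3}) and (\ref{ms6}), which is precisely the argument you spell out. Your write-up merely makes explicit the details the paper leaves implicit (discrete spectrum of the odometer, $S^{p^n}\to Id$, and the constancy of the sequences $\widehat\sigma_x(rtp^n)=c_1$, $\widehat\sigma_x(stp^n)=c_2$ giving (\ref{wo3}) and (\ref{wo4})), so there is nothing to correct.
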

\begin{proof} It is well-known that $\sigma_f=\sigma_x$, e.g.\ \cite{Le}. The result then follows in view of~Corollary~\ref{cwo} (with $q_n=p^n$), (\ref{ms3}) and~(\ref{ms6}).\end{proof}

\begin{Remark}\label{r} \em We can slightly strengthen this result by observing that whenever $|c_1|\neq|c_2|$ then $\sigma_x^{(r)}\perp \sigma_x^{(s)}\ast\delta_{z_0}$ for each $z_0\in\bs^1$. Indeed, we only need to show that whenever $z^{q_n}\to c$ weakly in $L^2(\bs^1,\sigma)$ (that is, $\int_{{\bs^1}}z^{q_n+m}\,d\sigma(z)\to c\int_{\bs^1}z^m\,d\sigma(z)$ for each $m\in\Z$) and $z_0^{q_n}\to \alpha$, $|\alpha|=1$, then $z^{q_n}\to\alpha c$ weakly in $L^2(\bs^1,\sigma\ast\delta_{z_0})$.\end{Remark}

Consider now $m_{n}=(-1)^{s_2(n)}$, where $s_2(n)=0$ if the number of $1$s in the binary expansion of $n$ is even, and $1$ otherwise (cf.\ footnote~\ref{zielona} with $E=\N$). Directly from that we have $m_{2n}=m_n$ and $m_{2n+1}=-1$. Moreover, for each $n\geq0$
\beq\label{ms7}
m_n=m_n(01\times01\times\ldots)\eeq
The sequence $x=01\times01\times\ldots$ is called the Thue-Morse sequence. From now on, we deal only with $\sigma=\sigma_x$, where $x$ is the Morse-Thue sequence.

Since, for each $k\geq0$, $\widehat{\sigma}(k)=\lim_{N\to\infty}\frac1N\sum_{n=1}^Nm_{n+k}\cdot m_n$, we obtain
\beq\label{sm}
\widehat{\sigma}(2k)=\widehat{\sigma}(k),\;\;\widehat{\sigma}_{2k+1}=-\frac12\big(
\widehat{\sigma}(k)+\widehat{\sigma}(k+1)\big)~\footnote{Indeed, $m(1)=-1$, so
$$
\frac1N\sum_{n=1}^Nm_{n+2k+1}\cdot m_{n}=$$
$$
\frac1N\left(\sum_{n=1}^{[N/2]}m_{2n+2k+1}\cdot m_{2n}+\sum_{n=1}^{[N/2]}m_{2n+1+2k+1}\cdot m_{2n+1}+\mbox{O}(1)\right)=$$
$$
-\frac12\left(\frac1{[N/2]}\sum_{n=1}^{[N/2]}m_{n+k}\cdot m_{n}+\frac1{[N/2]}\sum_{n=1}^{[N/2]}m_{n+k+1}\cdot m_{n}\right)+\mbox{o}(1/N),$$
whence~(\ref{sm}) follows.}.\eeq
Moreover, we can check directly that $\widehat{\sigma}(0)=1$ and $\widehat{\sigma}(1)=-\frac13$.

\section{Fourier transform of the spectral measure of the Thue-Morse sequence. Spectral disjointness of powers}

Following (\ref{sm}), we
consider the sequence:
\beq\label{mt1}\begin{array}{lll} \widehat{\sigma}(0)=1,\widehat{\sigma}(1)=-\frac13,\widehat{\sigma}(2n)=\widehat{\sigma}(n),&&\\
\widehat{\sigma}(2n+1)=-\frac12(\widehat{\sigma}(n)+\widehat{\sigma}(n+1))&\mbox{for}&n\geq1.\end{array}\eeq

In what follows we need the values of $\widehat{\sigma}$ for
small odd numbers\footnote{We list some other values at prime
instances to see that there are equalities between many Fourier coefficients: $\widehat{\sigma}(17)=\widehat{\sigma}(31)=\frac1{12}$,
$\widehat{\sigma}(19)=\widehat{\sigma}(23)=\widehat{\sigma}(29)=-\frac1{12}$,
$\widehat{\sigma}(37)=\widehat{\sigma}(41)=\widehat{\sigma}(59)=-\frac1{24}$,
$\widehat{\sigma}(43)=\widehat{\sigma}(53)=\frac1{24}$,
$\widehat{\sigma}(47)=\widehat{\sigma}(61)=-\frac18$.}:
$$
\begin{array}{cccc}
\widehat{\sigma}(1)=-\frac13,& \widehat{\sigma}(3)=-\frac13,& \widehat{\sigma}(5)=0,&\widehat{\sigma}(7)=0,\\
\widehat{\sigma}(9)=-\frac16,&\widehat{\sigma}(11)=-\frac16,&\widehat{\sigma}(13)=-\frac16,&\widehat{\sigma}(15)=\frac16,
\end{array}
$$

\begin{Lemma}\label{rekurencja} For any natural numbers $n,a\geq1$, we have
$$
\widehat{\sigma}(2^an+1)=\left(-\frac12\right)^a
\left(\widehat{\sigma}(n+1)+\frac13\widehat{\sigma}(n)\right)-\frac13\widehat{\sigma}(n).
$$
\end{Lemma}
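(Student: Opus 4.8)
The plan is to proceed by induction on $a$, using only the two recursion rules $\widehat{\sigma}(2n)=\widehat{\sigma}(n)$ and $\widehat{\sigma}(2n+1)=-\frac12(\widehat{\sigma}(n)+\widehat{\sigma}(n+1))$ recorded in~(\ref{mt1}). For the base case $a=1$ I would simply apply the odd rule to $\widehat{\sigma}(2n+1)$ and check that it agrees with the right-hand side: expanding $\left(-\frac12\right)\left(\widehat{\sigma}(n+1)+\frac13\widehat{\sigma}(n)\right)-\frac13\widehat{\sigma}(n)$ gives $-\frac12\widehat{\sigma}(n+1)-\frac16\widehat{\sigma}(n)-\frac13\widehat{\sigma}(n)=-\frac12(\widehat{\sigma}(n)+\widehat{\sigma}(n+1))$, as required.

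For the inductive step, assume the identity for some $a\geq1$ and all $n\geq1$. The key observation is that $2^{a+1}n+1=2\,(2^an)+1$ is odd, so the odd rule peels off one factor of two:
$$\widehat{\sigma}(2^{a+1}n+1)=-\frac12\Big(\widehat{\sigma}(2^an)+\widehat{\sigma}(2^an+1)\Big).$$
Here $\widehat{\sigma}(2^an)=\widehat{\sigma}(n)$ by iterating the even rule $a$ times (valid because $2^an,2^{a-1}n,\dots,2n$ are all even), while $\widehat{\sigma}(2^an+1)$ is given by the inductive hypothesis. Substituting both and collecting terms yields the claimed formula for $a+1$.

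The only point requiring care is the bookkeeping of the constant term. Writing $\lambda=-\frac12$ and $A=\widehat{\sigma}(n+1)+\frac13\widehat{\sigma}(n)$, the inductive hypothesis reads $\widehat{\sigma}(2^an+1)=\lambda^a A-\frac13\widehat{\sigma}(n)$, and the computation above becomes $\widehat{\sigma}(2^{a+1}n+1)=\lambda\big(\widehat{\sigma}(n)+\lambda^a A-\frac13\widehat{\sigma}(n)\big)=\lambda^{a+1}A+\lambda\widehat{\sigma}(n)-\frac{\lambda}{3}\widehat{\sigma}(n)$. The term $-\frac13\widehat{\sigma}(n)$ reproduces itself precisely because it is the fixed point of the affine map $B\mapsto\lambda\widehat{\sigma}(n)+\lambda B$: indeed $\lambda\widehat{\sigma}(n)-\frac{\lambda}{3}\widehat{\sigma}(n)=-\frac12\widehat{\sigma}(n)+\frac16\widehat{\sigma}(n)=-\frac13\widehat{\sigma}(n)$. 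Since there is no genuine obstacle here beyond this constant-term cancellation, the induction closes immediately.
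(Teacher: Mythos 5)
Your proof is correct and takes exactly the route the paper intends: the paper's proof of Lemma~\ref{rekurencja} is the single sentence that the assertion follows by induction on $a$, and your argument is precisely that induction, with the base case checked against the odd rule in~(\ref{mt1}) and the step obtained by writing $2^{a+1}n+1=2(2^an)+1$ and using $\widehat{\sigma}(2^an)=\widehat{\sigma}(n)$. You have simply supplied the bookkeeping (in particular the cancellation showing that $-\frac13\widehat{\sigma}(n)$ is the fixed constant term) that the paper leaves to the reader.
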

\begin{proof}
The assertion follows by induction on $a$.
\end{proof}

For a non-zero rational number $w=\frac{2^kp}{2^{\ell}q}$, where
$p,q\in\Z$ are odd, by $v_2(w)$ we denote the multiplicity of $2$
in $w$, i.e.\ $v_2(w)=k-l$.

\begin{Lemma}\label{st} If $v_2(w_1)> v_2(w_2)$  then $v_2(w_1+w_2)=v_2(w_2)$.\end{Lemma}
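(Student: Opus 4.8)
The plan is to recall that for a nonzero rational $w=\frac{2^kp}{2^\ell q}$ with $p,q$ odd, the definition $v_2(w)=k-\ell$ is precisely the exponent of $2$ in the unique reduced-fraction representation, and that $v_2$ is a well-defined, additive valuation on $\Q^\ast$ satisfying $v_2(w_1w_2)=v_2(w_1)+v_2(w_2)$. The statement $v_2(w_1+w_2)=v_2(w_2)$ when $v_2(w_1)>v_2(w_2)$ is the standard ultrametric (non-archimedean) property of the $2$-adic valuation, so the whole task is to verify it directly from the given definition.

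First I would put both summands over a common denominator. Write $w_1=\frac{2^{k_1}p_1}{2^{\ell_1}q_1}$ and $w_2=\frac{2^{k_2}p_2}{2^{\ell_2}q_2}$ with all $p_i,q_i$ odd, so that $v_2(w_1)=k_1-\ell_1=:a_1$ and $v_2(w_2)=k_2-\ell_2=:a_2$, and our hypothesis is $a_1>a_2$. Factoring out the powers of two, each $w_i$ can be written in the normalized form $w_i=2^{a_i}\frac{p_i'}{q_i'}$ with $p_i',q_i'$ odd; indeed, after cancelling, a reduced nonzero rational has numerator and denominator with $v_2$ equal to $a_i$ and $0$ respectively. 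Then
\beq\label{st-sum}
w_1+w_2=2^{a_1}\frac{p_1'}{q_1'}+2^{a_2}\frac{p_2'}{q_2'}
=2^{a_2}\left(2^{a_1-a_2}\frac{p_1'}{q_1'}+\frac{p_2'}{q_2'}\right)
=2^{a_2}\cdot\frac{2^{a_1-a_2}p_1'q_2'+p_2'q_1'}{q_1'q_2'}.
\eeq

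The key step is to show that the bracketed rational in~(\ref{st-sum}) has $v_2$ equal to $0$, which forces $v_2(w_1+w_2)=a_2=v_2(w_2)$ by additivity of $v_2$ over the factor $2^{a_2}$. The denominator $q_1'q_2'$ is odd, so contributes $0$ to $v_2$. For the numerator $2^{a_1-a_2}p_1'q_2'+p_2'q_1'$, observe that since $a_1-a_2\geq 1$ the first term $2^{a_1-a_2}p_1'q_2'$ is even, while the second term $p_2'q_1'$ is a product of two odd integers, hence odd; an even plus an odd integer is odd, so the numerator is an odd integer and its $v_2$ is $0$. Therefore the bracket has $v_2=0$ and the claim follows. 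The one point requiring a word of care — and the only place where the argument could go wrong — is ensuring the numerator does not vanish: it is odd, hence nonzero, so $w_1+w_2\neq 0$ and $v_2(w_1+w_2)$ is indeed defined; this is exactly why the strict inequality $v_2(w_1)>v_2(w_2)$ (rather than equality) is needed, since in the equal-valuation case the leading terms could cancel and raise the valuation.
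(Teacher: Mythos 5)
Your proof is correct and takes essentially the same route as the paper's: put the sum over a common denominator, factor out the power of $2$ corresponding to $v_2(w_2)$, and observe that the remaining numerator is even plus odd, hence odd, so it contributes nothing to the valuation. The only cosmetic difference is that you first normalize each $w_i$ as $2^{a_i}$ times a ratio of odd integers, whereas the paper manipulates the exponents $k_i,l_i$ directly; your closing remark that the odd numerator is nonzero (so the valuation of the sum is defined) makes explicit a point the paper leaves implicit.
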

\begin{proof}
Writing $w_i=\frac{2^{k_i}p_i}{2^{l_i}q_i}$, $i=1,2$, we obtain
$$
w_1+w_2=\frac{2^{k_2+l_1}\big(2^{k_1+l_2-(k_2+l_1)}p_1q_2+p_2q_1\big)}{2^{l_1+l_2}q_1q_2}$$
since, by assumption, $k_1+l_2> k_2+l_1$, and the result
follows.\end{proof}

Given an odd natural number $K$, we define sequences $K_0,\ldots,K_r$
and $a_1,\ldots,a_r$ of natural numbers, where each $K_i$ is odd and:
$$
\begin{array}{l}
K_0=K,\;K_r=1,\\
K_{i-1}=2^{a_i}K_i+1
\end{array}
$$
for $i=1,\ldots,r$. Set $r(K):=r$ and $l(K):=a_1+\ldots+a_r$.

\begin{Lemma}\label{logarytm}
For an odd natural number $K$, we have $l(K)=[\log_2K]$; in particular
$$
2^{l(K)}< K< 2^{l(K)+1}
$$
\end{Lemma}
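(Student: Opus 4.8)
The plan is to show that the recursion defining the $K_i$ and $a_i$ is nothing but the binary expansion of $K$ read off from the least significant bit, so that $l(K)=a_1+\cdots+a_r$ is exactly the position of the top bit of $K$, which is by definition $[\log_2 K]$.

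First I would record that each $a_i\ge1$: since every $K_i$ is odd, the relation $K_{i-1}-1=2^{a_i}K_i$ forces $K_{i-1}-1$ to be even, so $a_i=v_2(K_{i-1}-1)\ge1$. Consequently the partial sums $s_j:=a_1+\cdots+a_j$ (with $s_0:=0$) are strictly increasing: $0=s_0<s_1<\cdots<s_r=l(K)$. Unfolding the recursion by substituting $K_{i-1}=2^{a_i}K_i+1$ repeatedly, starting from $K_r=1$, then yields
$$
K=K_0=\sum_{j=0}^{r}2^{s_j},\qquad s_j=a_1+\cdots+a_j .
$$
Because the exponents $s_0<s_1<\cdots<s_r$ are pairwise distinct, this is precisely the binary expansion of $K$, whose largest term is $2^{s_r}=2^{l(K)}$. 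Hence $2^{l(K)}\le K<2^{l(K)+1}$, which is the same as $l(K)=[\log_2 K]$.

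The version I would actually write up, to avoid bookkeeping with the unfolded sum, is a downward induction on $i$ proving $[\log_2 K_i]=a_{i+1}+\cdots+a_r$. The base case $K_r=1$ is clear. For the step, writing $L=a_{i+1}+\cdots+a_r$ and assuming $2^{L}\le K_i<2^{L+1}$, the identity $K_{i-1}=2^{a_i}K_i+1$ gives
$$
2^{a_i+L}<K_{i-1}=2^{a_i}K_i+1\le 2^{a_i+L+1}-2^{a_i}+1<2^{a_i+L+1},
$$
where the final strict inequality uses $a_i\ge1$ (so $2^{a_i}\ge2$). Thus $[\log_2 K_{i-1}]=a_i+L$, and taking $i=0$ yields the assertion.

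I do not expect any real obstacle here; the only point deserving care is the strictness of the left-hand inequality in the displayed formula. For $K\ge3$ (equivalently $r\ge1$) the odd number $K$ has at least the two set bits $2^0$ and $2^{l(K)}$, so $K>2^{l(K)}$ strictly; in the degenerate case $K=1$ one has $r=0$, $l(K)=0$ and $K=2^{l(K)}$, so the strict lower bound is to be understood as referring to the nondegenerate case $K\ge3$. In essence the lemma is just the remark that this algorithm computes the binary length of $K$.
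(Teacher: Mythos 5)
Your proof is correct and takes essentially the same approach as the paper: the paper's entire proof is the one line ``Follows by induction on $r(K)$,'' and your downward induction on $i$ (equivalently, induction on $r(K)$, unwinding one step $K_{i-1}=2^{a_i}K_i+1$ at a time) is exactly that argument with the details filled in. Your side remark that the strict inequality $2^{l(K)}<K$ degenerates to equality when $K=1$ is a fair observation about a minor imprecision in the statement, which is harmless since the lemma is only invoked for larger odd $K$.
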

\begin{proof}
Follows by induction on $r(K)$.
\end{proof}

\begin{Lemma}\label{sigma}
If $K$ is an odd natural number then $\widehat{\sigma}(K)=0$ or
$v_2(\widehat{\sigma}(K))\ge 2-l(K)$. If  $K\ge 9$ then
$$
v_2(\widehat{\sigma}(K))=2-l(K).
$$
\end{Lemma}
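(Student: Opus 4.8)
The plan is to induct on the odd number $K$, controlling $\widehat{\sigma}(K)$ together with the auxiliary quantity $D(K):=\widehat{\sigma}(K+1)+\tfrac13\widehat{\sigma}(K)$. Write $K=2^aK'+1$ with $K'=K_1$ odd and $a=a_1\ge1$, so that $l(K)=a+l(K')$. Lemma~\ref{rekurencja} gives immediately
\[
\widehat{\sigma}(K)=\left(-\tfrac12\right)^aD(K')-\tfrac13\widehat{\sigma}(K'),
\]
and, writing $\widehat{\sigma}(K+1)=\widehat{\sigma}(2^{a-1}K'+1)$ (the halving rule $\widehat{\sigma}(2m)=\widehat{\sigma}(m)$ followed by Lemma~\ref{rekurencja} when $a\ge2$, and a trivial identity when $a=1$) and combining, the companion recursion
\[
D(K)=-\tfrac53\left(-\tfrac12\right)^aD(K')-\tfrac49\widehat{\sigma}(K').
\]

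First I would prove by a single joint induction two uniform statements: (D) $D(K)\neq0$ with $v_2(D(K))=2-l(K)$, and the crude bound (C) $v_2(\widehat{\sigma}(K))\ge -l(K)$ (with $v_2(0)=+\infty$). Both are checked for the seed $K=1$, where $\widehat{\sigma}(1)=-\tfrac13$ and $D(1)=-\tfrac49$. For the step, in the $D$-recursion the first summand has valuation $-a+(2-l(K'))=2-l(K)$ by (D) for $K'$, while the second has valuation $2+v_2(\widehat{\sigma}(K'))\ge 2-l(K')>2-l(K)$ by (C) for $K'$, the inequality being strict since $a\ge1$; Lemma~\ref{st} then yields $v_2(D(K))=2-l(K)$, which is (D). The bound (C) for $K$ follows from the first recursion, both of whose summands have valuation $\ge -l(K)$.

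With (D) available I would obtain the Lemma from $\widehat{\sigma}(K)=(-\tfrac12)^aD(K')-\tfrac13\widehat{\sigma}(K')$, whose first summand has valuation exactly $2-l(K)$; everything hinges on comparing it with $v_2(\widehat{\sigma}(K'))$. If $K'\ge9$ then $v_2(\widehat{\sigma}(K'))=2-l(K')=2-l(K)+a>2-l(K)$ by the inductive case of the Lemma, so the first summand dominates and $v_2(\widehat{\sigma}(K))=2-l(K)$, with $K\ge9$ here. If $K'\in\{5,7\}$ then $\widehat{\sigma}(K')=0$ and the same conclusion holds (now $K\ge11$). The only non-generic seeds are $K'\in\{1,3\}$, where $v_2(\widehat{\sigma}(K'))=0$: the two summands have equal valuation precisely when $2-a-l(K')=0$, i.e.\ $(K',a)=(1,2)$ or $(3,1)$, giving $K=5$ or $K=7$, where a one-line computation exhibits the exact cancellation $\widehat{\sigma}(5)=\widehat{\sigma}(7)=0$; for every other $a$ (all of which force $K\ge9$) the first summand strictly dominates and again $v_2(\widehat{\sigma}(K))=2-l(K)$. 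The finitely many small cases $K\in\{1,3,5,7\}$ are read directly off the table of values above.

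The step I expect to be the main obstacle is the bookkeeping of equal-valuation collisions, where Lemma~\ref{st} gives no information and where the vanishing coefficients $\widehat{\sigma}(5)=\widehat{\sigma}(7)=0$ are actually produced. The purpose of isolating the invariant (D) is exactly to quarantine this phenomenon: in the $D$-recursion the valuation gap is always at least $a\ge1$, so no collision can occur there, and consequently the only collisions in the $\widehat{\sigma}$-recursion descend from the two exceptional seeds $K'\in\{1,3\}$ and are confined to $K<9$. This is precisely why the equality $v_2(\widehat{\sigma}(K))=2-l(K)$ can be asserted from $K\ge9$ onward.
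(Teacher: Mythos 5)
Your proof is correct in substance, and it takes a genuinely different route from the paper's. The paper inducts on $l(K)\ge3$ directly on the statement of the Lemma: for $K=2^{a_1}K_1+1$ with $K_1>7$ it controls the awkward term $\widehat{\sigma}(K_1+1)$ by writing $K_1+1=2^sL$ with $L$ odd, using the halving rule to replace $\widehat{\sigma}(K_1+1)$ by $\widehat{\sigma}(L)$, checking $l(L)<l(K_1)$, and applying the Lemma inductively to $L$; the seeds $K_1\in\{1,3,5,7\}$ are handled by four explicit computations inside the inductive step. You instead bundle the awkward term into the invariant $D(K)=\widehat{\sigma}(K+1)+\tfrac13\widehat{\sigma}(K)$ and give it its own recursion $D(K)=-\tfrac53(-\tfrac12)^aD(K')-\tfrac49\widehat{\sigma}(K')$, which I have verified (including the $a=1$ boundary case, and numerically: $D(1)=-\tfrac49$, $D(3)=-\tfrac29$, $D(5)=\tfrac13$, $D(7)=-\tfrac13$). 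Your route buys two things. First, uniformity: in the $D$-recursion the valuation gap is at least $a\ge1$, so (D) holds for every odd $K\ge1$ with no exceptional cases, and the only equal-valuation collisions left are the ones producing $\widehat{\sigma}(5)=\widehat{\sigma}(7)=0$. Second, your crude bound (C) is exactly the uniformly true substitute for the first clause of the Lemma, which as stated is actually \emph{false} at $K\in\{1,3\}$ (there $v_2(\widehat{\sigma}(K))=0<2-l(K)$). This repairs a real blemish in the paper: in its case $K_1>7$ the paper appeals to the inductive assumption at $L$, where $l(L)$ may lie below the induction's base and the asserted equality $v_2(\widehat{\sigma}(L))=2-l(L)$ fails when $L\in\{1,3\}$ (the inequality actually needed, $v_2(\widehat{\sigma}(L))>2-l(K_1)$, survives only because $l(K_1)\ge3$). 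What the paper's route buys in exchange is brevity: no companion recursion has to be derived and propagated.

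Two slips to correct in your write-up, neither fatal. First, the parenthetical ``(all of which force $K\ge9$)'' is wrong for $(K',a)=(1,1)$, i.e.\ $K=3$: there the \emph{second} summand has the smaller valuation, so Lemma~\ref{st} gives $v_2(\widehat{\sigma}(3))=0$, not $2-l(3)=1$; this is harmless because $K=3<9$, but the sentence should exclude that pair rather than claim it cannot occur. Second, the cases $K\in\{1,3\}$ cannot be ``read directly off the table'' as confirming the Lemma---they refute its first clause, as noted above; what your argument (and the paper's) really establishes is the inequality for all odd $K\ge5$ and the equality for all odd $K\ge9$, which is all that is ever used (Lemma~\ref{lmlsk} assumes $K,L\ge9$). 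Relatedly, be aware that the paper's table contains sign typos: the recursion~(\ref{mt1}) gives $\widehat{\sigma}(3)=+\tfrac13$ and $\widehat{\sigma}(9)=+\tfrac16$, not $-\tfrac13$ and $-\tfrac16$ (the paper's own computation in its case $K_1=3$ uses the correct value). No $2$-adic valuation is affected, but your ``one-line'' cancellation at $K=7$ does need the correct sign: $\widehat{\sigma}(7)=-\tfrac12 D(3)-\tfrac13\widehat{\sigma}(3)=\tfrac19-\tfrac19=0$ holds with $D(3)=-\tfrac29$, i.e.\ with $\widehat{\sigma}(3)=+\tfrac13$.
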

\begin{proof}
We proceed by induction on $l(K)\ge 3$. If $l(K)=3$ then (see Lemma~\ref{logarytm})  $K\in\{9,11,13,15\}$ and we see that the assertion is true.

Assume that $l(K)>3$. Then $K=2^{a_1}K_1+1$. We consider the cases
$1\le K_1\le 7$ and $K_1>7$ separately.

If $K_1=1$ then $l(K)=a_1>3$ and (by Lemma~\ref{rekurencja})
$$
\widehat{\sigma}(K)=\big(-\frac12\big)^{a_1}\left(
\widehat{\sigma}(2)+\frac13\widehat{\sigma}(1)\right)-\frac13\widehat{\sigma}(1)
=\big(-\frac12\big)^{a_1}\big(-\frac49\big)+\frac19=\frac{-(-1)^{a_1}+2^{a_1-2}}{9\cdot
2^{a_1-2}}
$$
and $v_2(\widehat{\sigma}(K))=2-a_1=2-l(K)$.

If $K_1=3$ then $l(K)=a_1+1>3$ and
$$
\widehat{\sigma}(K)=\big(-\frac12\big)^{a_1}\left(
\widehat{\sigma}(4)+\frac13\widehat{\sigma}(3)\right)-\frac13\widehat{\sigma}(3)
=\big(-\frac12\big)^{a_1}\big(-\frac29\big)-\frac19=\frac{-(-1)^{a_1}-2^{a_1-1}}{9\cdot
2^{a_1-1}}
$$
and $v_2(\widehat{\sigma}(K))=1-a_1=2-l(K)$.

If $K_1=5$ then $l(K)=a_1+2> 3$ and
$$
\widehat{\sigma}(K)=\big(-\frac12\big)^{a_1}\left(
\widehat{\sigma}(6)+\frac13\widehat{\sigma}(5)\right)-\frac13\widehat{\sigma}(5)
=\big(-\frac12\big)^{a_1}\frac13=\frac{(-1)^{a_1}}{3\cdot 2^{a_1}}
$$
and $v_2(\widehat{\sigma}(K))=-a_1=2-l(K)$.

If $K_1=7$ then $l(K)=a_1+2> 3$ and
$$
\widehat{\sigma}(K)=\big(-\frac12\big)^{a_1}
\left(\widehat{\sigma}(8)+\frac13\widehat{\sigma}(7)\right)-\frac13\widehat{\sigma}(7)
=\big(-\frac12\big)^{a_1}\big(-\frac13\big)=\frac{(-1)^{a_1+1}}{3\cdot 2^{a_1}}
$$
and $v_2(\widehat{\sigma}(K))=-a_1=2-l(K)$.

Now consider the case $K_1>7$. Then $K_1+1=2(2^{a_2-1}K_2+1)=2^sL$
for some $s\ge 1$ and an odd number $L$. Observe that
$l(L)<l(K_1)$ (otherwise, $K_1<2^{m+1}$ and $2^m<L$ for some $m$
which leads to a contradiction: $2^{m+s}<2^sL=K_1+1\le 2^{m+1}$).
It follows that either
$\widehat{\sigma}(K_1+1)=\widehat{\sigma}(L)=0$  or
$v_2(\widehat{\sigma}(K_1+1))=v_2(\widehat{\sigma}(L))= 2-l(L)> 2-l(K_1)$ thanks to  the
inductive assumption. It follows that
$$v_2\left(\big(-\frac12\big)^{a_1}\widehat{\sigma}(K_1+1)\right)>2-a_1-l(K_1)=2-l(K),$$
and similarly
$$
v_2\left(\big(-\frac12\big)^{a_1}\frac13\widehat{\sigma}(K_1)\right)=2-a_1-l(K_1)=2-l(K)
$$
by the inductive assumption.  Moreover
$$v_2\left(\frac13\widehat{\sigma}(K_1)\right)=2-l(K_1)>2-l(K).$$
Recalling that
$$
\widehat{\sigma}(K)=
\big(-\frac12\big)^{a_1}\left(\widehat{\sigma}(K_1+1)+\frac13\widehat{\sigma}(K_1)\right)-
\frac13\widehat{\sigma}(K_1),
$$
using Lemma~\ref{st}, we finally obtain that
$v_2(\widehat{\sigma}(K))=2-l(K)$.
\end{proof}

Two numbers $K,L\geq 0$ are called {\em TM-equivalent} if $\widehat{\sigma}(2K+1)=\widehat{\sigma}(2L+1)$.


\begin{Lemma}\label{lmlsk} Assume that $K,L\ge 9$ are odd natural numbers. If $L$ is TM-equivalent to $K$ then $l(K)=l(L)$.
\end{Lemma}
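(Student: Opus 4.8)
The plan is to read off the quantity $l$ from the $2$-adic valuation of $\widehat{\sigma}$, which is precisely what Lemma~\ref{sigma} makes possible for odd arguments $\geq 9$. Since $K,L\geq 9$ are odd, the numbers $2K+1$ and $2L+1$ are odd and $\geq 19$, so Lemma~\ref{sigma} applies to both and gives
$$
v_2(\widehat{\sigma}(2K+1))=2-l(2K+1),\qquad v_2(\widehat{\sigma}(2L+1))=2-l(2L+1).
$$
In particular both $\widehat{\sigma}(2K+1)$ and $\widehat{\sigma}(2L+1)$ are nonzero rationals, since the equality clause of Lemma~\ref{sigma} asserts a finite value of $v_2$; this is the one point that needs to be checked, and it is handed to us for free by the hypothesis $K,L\geq 9$.

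First I would invoke the TM-equivalence: by hypothesis $\widehat{\sigma}(2K+1)=\widehat{\sigma}(2L+1)$, and as these are equal nonzero rationals their $2$-adic valuations coincide. Combined with the displayed formulas this yields $l(2K+1)=l(2L+1)$. So the whole problem reduces to relating $l(2K+1)$ to $l(K)$.

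For that step I would use Lemma~\ref{logarytm}, which identifies $l(M)=[\log_2M]$ for odd $M$ and provides the bounds $2^{l(K)}<K<2^{l(K)+1}$. Multiplying by $2$ gives $2^{l(K)+1}<2K<2^{l(K)+2}$; since $2K+1$ is odd while $2^{l(K)+2}$ is even, adding $1$ keeps the right inequality strict, so $2^{l(K)+1}<2K+1<2^{l(K)+2}$ and hence $l(2K+1)=[\log_2(2K+1)]=l(K)+1$. The identical computation gives $l(2L+1)=l(L)+1$. Putting the pieces together,
$$
l(K)+1=l(2K+1)=l(2L+1)=l(L)+1,
$$
so $l(K)=l(L)$, as claimed. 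I do not expect any genuine obstacle here: the argument is just the exact $2$-adic valuation of Lemma~\ref{sigma} read through the index shift $l(2K+1)=l(K)+1$, the only subtlety being the non-vanishing of the two Fourier coefficients, which the lower bound $K,L\geq 9$ already secures.
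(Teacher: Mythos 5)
Your proof is correct and follows essentially the same route as the paper: both read $l$ off the exact $2$-adic valuation supplied by Lemma~\ref{sigma}. Yours is in fact slightly more complete than the paper's one-line proof, which asserts $v_2(\widehat{\sigma}(K))=v_2(\widehat{\sigma}(L))$ directly without spelling out the bridge from the definition of TM-equivalence (stated in terms of $2K+1$ and $2L+1$) --- the bridge you supply by applying Lemma~\ref{sigma} to $2K+1$, $2L+1$ and using the index shift $l(2K+1)=l(K)+1$.
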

\begin{proof}
Since $v_2(\widehat\sigma(K))=v_2(\widehat\sigma(L)))$ and $K,L\ge
9$ then, in view of Lemma~\ref{sigma}, we get $l(K)=l(L)$.
\end{proof}

Finally, we need the following observation.

\begin{Lemma}\label{lmt7}
Assume that $r,s\geq1$, $r<s$,  are odd numbers. Then for any
integer $a\geq1$ large enough there exist an odd number $t$  such
that
$$
rt<2^a<st.$$\end{Lemma}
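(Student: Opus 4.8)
The plan is to rewrite the desired double inequality as a statement about an odd integer lying in an interval whose length grows with $a$. Since $r,s$ are positive, dividing $rt<2^a<st$ by $r$ and by $s$ respectively shows that it is equivalent to asking for an odd integer $t$ with
$$
\frac{2^a}{s}<t<\frac{2^a}{r}.
$$
Thus the task reduces to proving that the open interval $I_a:=\left(\frac{2^a}{s},\frac{2^a}{r}\right)$ (nonempty since $r<s$) contains an odd integer as soon as $a$ is large enough. Note that the strictness of the two inequalities $rt<2^a<st$ will then be automatic, because $t$ is chosen \emph{strictly} inside $I_a$, so no boundary coincidence $2^a=rt$ or $2^a=st$ can occur.

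The key point is a length estimate for $I_a$. Its length equals
$$
2^a\left(\frac1r-\frac1s\right)=2^a\cdot\frac{s-r}{rs}.
$$
Here the hypothesis that both $r$ and $s$ are odd is used: their difference $s-r$ is a positive even integer, hence $s-r\ge 2$, and so the length of $I_a$ is at least $\frac{2^{a+1}}{rs}$, which tends to infinity with $a$. The odd integers form an arithmetic progression of common difference $2$, and every open interval of length strictly greater than $2$ must meet it (the smallest odd integer exceeding the left endpoint $x$ is at most $x+2$, hence lands strictly inside an open interval of length $>2$ based at $x$). Consequently, as soon as $a$ is large enough that $\frac{2^{a+1}}{rs}>2$, that is, $2^a>rs$, the interval $I_a$ contains an odd integer $t$, and for this $t$ we get $rt<2^a<st$, as required.

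I do not expect a genuine obstacle here, since the statement is elementary; the only two points deserving a moment's care are (i) the observation $s-r\ge 2$, which is exactly where the oddness of $r$ and $s$ enters, and (ii) the elementary fact that an open interval of length exceeding $2$ always captures an odd integer (equivalently: among any two consecutive integers contained in a window of length just over $2$, one is odd). Everything else is the bookkeeping of the explicit threshold $2^a>rs$ and the reduction to the interval $I_a$ described above.
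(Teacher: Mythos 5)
Your proof is correct and is essentially the paper's argument: both rest on the fact that the interval $\left(2^a/s,\,2^a/r\right)$ in which $t$ must lie has length $2^a(s-r)/(rs)\geq 2^{a+1}/(rs)$ (oddness giving $s-r\geq 2$), which exceeds $2$ for large $a$, so that among two consecutive integers inside it one is odd. The only cosmetic difference is that the paper first reduces to the extreme case $s=r+2$ and names the two candidates $\left[2^a/(r+1)\right]$ and $\left[2^a/(r+1)\right]+1$ explicitly, whereas you argue directly on the general interval and obtain the clean threshold $2^a>rs$.
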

\begin{proof}It is enough to consider $s=r+2$. Then, for each $a\geq1$ sufficiently large, we have
$$
r\left(\frac{2^a}{r+1}+2\right)<2^a<(r+2)\left(\frac{2^a}{r+1}-1\right),$$
so it is enough to take for $t$ either $\left[\frac{2^a}{r+1}\right]$ or $\left[\frac{2^a}{r+1}\right]+1$ (for $a$ sufficiently large).
\end{proof}

Using Lemmas~\ref{sigma},~\ref{lmt7},~\ref{lmlsk} and
Corollary~\ref{cms1} and noticing that
$\sigma^{(k)}=\sigma^{(2k)}$  for each $k\geq1$ (equivalently,
$\sigma$ is invariant under the map $z\mapsto z^2$),
 we obtain the following result.

\begin{Prop}\label{pspdisj} Let $\sigma$
denote the spectral measure associated to the
 Thue-Morse sequence $x=01\times01\times\ldots$
 Then $\sigma^{(r)}\perp\sigma^{(s)}$ if and only if
 $\max\{r/s,s/r\}\notin\{2^a:\:a\in\N\}$. Moreover, $\sigma^{(r)}=\sigma^{(2^ar)}$
 for each integers $a,r\geq0$.\end{Prop}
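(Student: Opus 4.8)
The plan is to handle the two assertions in turn, deducing the ``moreover'' clause first since the equivalence will rest on it. The starting observation is that $\sigma$ is invariant under $z\mapsto z^2$: the Fourier coefficients of $\sigma^{(2)}$ satisfy $\int_{\bs^1}z^m\,d\sigma^{(2)}(z)=\int_{\bs^1}z^{2m}\,d\sigma(z)=\widehat\sigma(2m)=\widehat\sigma(m)$ by the first relation of~(\ref{sm}), so $\sigma^{(2)}=\sigma$. Iterating gives $\sigma^{(2^a)}=\sigma$, and since $z\mapsto z^{2^ar}$ factors as $z\mapsto z^{2^a}\mapsto(z^{2^a})^r$ we get $\sigma^{(2^ar)}=(\sigma^{(2^a)})^{(r)}=\sigma^{(r)}$, which is precisely the ``moreover'' statement.

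For the equivalence, the ``only if'' direction follows at once from this invariance. If $\max\{r/s,s/r\}=2^a$ (allowing $2^0=1$) then one of $r,s$ is a $2^a$-multiple of the other, say $s=2^ar$, so $\sigma^{(s)}=\sigma^{(r)}$; as $\sigma^{(r)}$ is a probability measure it cannot be mutually singular with itself, hence $\sigma^{(r)}\not\perp\sigma^{(s)}$. For the ``if'' direction I would first reduce to two distinct odd numbers. Writing $r=2^{\alpha}r'$ and $s=2^{\beta}s'$ with $r',s'$ odd, the invariance gives $\sigma^{(r)}=\sigma^{(r')}$ and $\sigma^{(s)}=\sigma^{(s')}$; moreover the ratios $r/s,s/r$ are powers of $2$ exactly when $r'=s'$, so the hypothesis $\max\{r/s,s/r\}\notin\{2^a\}$ is equivalent to $r'\neq s'$. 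Thus it suffices to prove $\sigma^{(r)}\perp\sigma^{(s)}$ for odd $r\neq s$, say $r<s$.

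For these, I would invoke Corollary~\ref{cms1} (with $B=01$): it is enough to produce an odd $t\ge 1$ with $\widehat\sigma(tr)\neq\widehat\sigma(ts)$, and the idea is to force $tr$ and $ts$ into distinct dyadic blocks. By Lemma~\ref{lmt7}, for all sufficiently large $a$ there is an odd $t$ with $rt<2^a<st$, and for large $a$ we also have $tr\ge 9$. Since $t,r,s$ are odd, both $tr$ and $ts$ are odd and $\ge 9$, while $tr<2^a<ts$ together with Lemma~\ref{logarytm} yields $l(tr)=[\log_2 tr]\le a-1<a\le[\log_2 ts]=l(ts)$. Lemma~\ref{sigma} then gives $v_2(\widehat\sigma(tr))=2-l(tr)\neq 2-l(ts)=v_2(\widehat\sigma(ts))$, so in particular $\widehat\sigma(tr)\neq\widehat\sigma(ts)$ (equivalently, $tr$ and $ts$ fail to be TM-equivalent, which is the contrapositive of Lemma~\ref{lmlsk}). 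Corollary~\ref{cms1} then delivers $\sigma^{(r)}\perp\sigma^{(s)}$, finishing the argument.

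The step I expect to be the crux is this last one: converting the purely metric fact that a power of $2$ can be squeezed strictly between $tr$ and $ts$ into an actual inequality of Fourier coefficients. This is possible only because Lemma~\ref{sigma} identifies $v_2(\widehat\sigma(K))$ as the single scale-invariant $2-l(K)$ for every odd $K\ge 9$, so that the $2$-adic valuation cleanly separates odd integers lying in different dyadic intervals; Lemma~\ref{lmt7} is exactly the elementary number-theoretic device that guarantees an odd multiplier $t$ realizing such a separation exists. Once these two inputs are in place, the reduction to the odd case and the appeal to Corollary~\ref{cms1} are routine.
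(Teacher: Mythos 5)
Your proof is correct and takes essentially the same route as the paper: the paper's own argument is precisely the combination of Lemmas~\ref{sigma}, \ref{lmt7}, \ref{lmlsk}, Corollary~\ref{cms1}, and the invariance $\sigma^{(k)}=\sigma^{(2k)}$, which is exactly what you use. Your write-up merely supplies the details the paper leaves implicit (the reduction to distinct odd $r,s$, the choice of odd $t$ with $tr<2^a<ts$ forcing $l(tr)<l(ts)$, and the resulting inequality of $2$-adic valuations of the Fourier coefficients).
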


\begin{Cor}\label{cspdisj}
 Let $\sigma$ denote the spectral measure associated to the Thue-Morse
 sequence $x=01\times01\times\ldots$ Then $\sigma^{(p)}\perp\sigma^{(q)}$
 for arbitrary odd numbers $p\neq q$.\end{Cor}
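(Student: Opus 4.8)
The plan is to derive the statement directly from Proposition~\ref{pspdisj}, which reduces the question to a purely arithmetic check. Given distinct odd numbers $p\neq q$, I would apply the proposition with $r=p$ and $s=q$, so that the conclusion $\sigma^{(p)}\perp\sigma^{(q)}$ holds precisely when $\max\{p/q,q/p\}$ fails to be a power of $2$. Thus the entire task is to verify this number-theoretic condition for distinct odd arguments.

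Without loss of generality assume $p>q$, so that $\max\{p/q,q/p\}=p/q>1$. The key observation is that this ratio cannot belong to $\{2^a:a\in\N\}$. Indeed, if $p/q=2^a$ for some $a$, then $p=2^aq$; since $p/q>1$ forces $a\geq1$, the right-hand side $2^aq$ is even, whereas $p$ is odd, a contradiction. (The degenerate value $2^0=1$ is likewise excluded, since $p/q=1$ would mean $p=q$.) Hence $\max\{p/q,q/p\}\notin\{2^a:a\in\N\}$, and Proposition~\ref{pspdisj} immediately yields $\sigma^{(p)}\perp\sigma^{(q)}$.

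There is essentially no obstacle to overcome here: the whole content of the corollary is already contained in Proposition~\ref{pspdisj}, whose proof rests on the computation of the $2$-adic valuation of $\widehat{\sigma}(K)$ in Lemma~\ref{sigma} together with Lemmas~\ref{lmt7} and~\ref{lmlsk} and Corollary~\ref{cms1}. The corollary merely specializes that result to the odd case, where the trivial parity argument above makes the disjointness criterion automatic. The only point worth stating explicitly is that a quotient of two distinct odd integers is never a power of $2$, which is precisely the parity computation just carried out.
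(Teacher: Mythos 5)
Your proof is correct and follows exactly the route the paper intends: Corollary~\ref{cspdisj} is stated there as an immediate specialization of Proposition~\ref{pspdisj}, the only content being the parity observation that a ratio of two distinct odd integers can never equal $2^a$ (with the case $a=0$ ruled out by $p\neq q$). Nothing is missing.
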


\begin{Remark}\label{r1} \em Recall \cite{Qu} that the maximal spectral
 type  of the Thue-Morse dynamical system is given by the measure which is the sum of two measures: the discrete measure concentrated on all roots of unity of degree $2^n$, $n\geq0$, and the continuous measure $\widetilde{\sigma}$ which is the convolution of the discrete measure with $\sigma$. In view of Remark~\ref{r}, we obtain that the assertions of Proposition~\ref{pspdisj} and Corollary~\ref{cspdisj} are true when $\sigma$ is replaced by $\widetilde{\sigma}$.\end{Remark}

\section{On Sarnak's conjecture  for the Thue-Morse dynamical system}
As before, we let $x$ denote the Thue-Morse sequence. Set $X=\co(x)$
and notice that the map $\tau$ which interchange $0$s and $1$s is a
homeomorphism (involution) preserving $X$ and commuting with the shift $T$.
 Denote also by $x$ any extension of $x$ to a two-sided member of $X$. Take $f\in C(X)$ arbitrary and note that
\beq\label{sa1} f=f_t+f_m,\eeq where $f_t=\frac12(f+f\circ\tau)$
and  $f_m=\frac12(f-f\circ\tau)$. Then $f_t=f_t\circ\tau$,
$f_m\circ \tau=-f_m$.

Basic spectral theory for group extensions shows that
the spectral measure $\sigma_{f_m}$ of $f_m$ is absolutely
continuous with respect to $\widetilde\sigma$, so by Remark~\ref{r1} and an observation from \cite{Ab-Le-Ru},
$$
\frac1N\sum_{n=1}^Nf_m(T^ny)\mob(n)\to 0$$
for {\bf each} $y\in\co(x)$~\footnote{In particular,
it holds for $f(y)=(-1)^{y(0)}$ which shows that that
not only $x$ but each $y\in\co(x)$ is orthogonal to the M\"obius function.}.

It follows that to show Sarnak's conjecture for the
Thue-Morse dynamical system we need to prove it for
the associated Toeplitz dynamical system \cite{Le}. We recall that the Toeplitz sequence $z$, called from now on, Thue-Toeplitz, associated
with the Thue-Morse sequence $x$ arises as follows:
At first step we
we put $z(0)=z(2)=\ldots=z(2n)=\ldots=1$ and leave odd
places undefined. At the second step we
put $z(1)=z(5)=\ldots=z(4n+1)=\ldots=0$, that is, we fill
every second unfilled place by putting 0 here. At the
third step we repeat step one setting $1$ at every second
unfilled place, etc. It is not hard to see that for each $n\geq0$
\beq\label{ole}
z=B_n?B_n?B_n?\ldots,
\eeq
where $|B_n|=2^n-1$~\footnote{In fact, $z(i)=x(i)+x(i+1)$ mod~2.}, and ``?'' stands for un
unfilled place (half of these unfilled places will be filled at step $n+1$ of the construction).
To prove Sarnak's conjecture for $(\co(z),T)$, we need to consider a linearly dense set of functions in $C(\co(z))$.

Given a sequence $w=(w(i))_{i\in\Z}$ and $a\in\Z$, $l\ge 0$, we denote by $w[a,a+l)$ the sequence $(w(a),w(a+1),\ldots,w(a+l-1))$.
We consider maps defined as follows:
Fix $\ell\geq1$, $a\in\Z$ and consider
$f:\{0,1\}^{\ell}\to\C$. It
has a natural extension to a continuous map $f:\co(z)\to\C$ by
setting
$$
f(w)=f(w[a,a+\ell)),\;\;w\in\co(z).
$$
Under this notation, $f(T^kw)=f(w[a+k,a+k+\ell))$.

Observe that any continuous map on $\co(z)$ having only finitely many values is obtained this way.

Given  $\ell\geq1$, $a\in\Z$ and a sequence $w'\in\{0,1\}^{\ell}$, we denote by $U_{w',a}$ the open set
$\{w\in\co(z):w[a,a+\ell)=w'\}$.

 \begin{Lemma}\label{dense}
The functions $f:\co(z)\rightarrow \C$ taking finitely many values
form a dense subset in $C(\co(z))$.
 \end{Lemma}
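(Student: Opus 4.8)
The plan is to show that the finitely-valued continuous functions on $\co(z)$ are dense in $C(\co(z))$ by appealing to the Stone--Weierstrass theorem, or more directly, by exploiting the fact that $\co(z)\subset\{0,1\}^{\Z}$ is a compact metrizable space that is totally disconnected (a subset of a Cantor space). On such a space, the characteristic functions of clopen sets are continuous and finitely-valued, and every continuous $\C$-valued function can be uniformly approximated by simple functions built from clopen sets. First I would recall that the topology on $\co(z)$ is generated by the cylinder sets $U_{w',a}$, which are simultaneously open and closed (clopen): each is open by definition, and its complement is a finite union of other cylinders $U_{w'',a}$ with $w''\neq w'$ in $\{0,1\}^\ell$, hence also open. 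Therefore $U_{w',a}$ is clopen and its indicator $\mathbf 1_{U_{w',a}}$ is a continuous function taking only the values $0$ and $1$.

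The key approximation step runs as follows. Fix $g\in C(\co(z))$ and $\vep>0$. Since $\co(z)$ is compact and $g$ is continuous, $g$ is uniformly continuous with respect to the standard metric on $\{0,1\}^{\Z}$, so there exist $\ell\geq1$ and $a\in\Z$ such that whenever $w[a,a+\ell)=w''[a,a+\ell)$ one has $|g(w)-g(w'')|<\vep$. The cylinders $\{U_{w',a}:w'\in\{0,1\}^\ell,\ U_{w',a}\neq\emptyset\}$ form a finite partition of $\co(z)$ into clopen sets. For each such $w'$, choose a point $w'_\ast\in U_{w',a}$ and define
$$
f=\sum_{w'}g(w'_\ast)\,\mathbf 1_{U_{w',a}}.
$$
Then $f$ is a continuous function on $\co(z)$ taking finitely many values, and by the choice of $\ell,a$ we have $|g(w)-f(w)|<\vep$ for every $w\in\co(z)$, so $\|g-f\|_\infty\leq\vep$.

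Since $\vep>0$ and $g$ were arbitrary, the finitely-valued continuous functions are dense in $C(\co(z))$, which is the assertion. Note that each $f$ constructed above is precisely of the form described before the lemma: it depends only on the coordinates in the window $[a,a+\ell)$, so it arises from a map $\{0,1\}^\ell\to\C$ extended to $\co(z)$, consistent with the paragraph preceding the statement (``any continuous map on $\co(z)$ having only finitely many values is obtained this way''). I expect the only point requiring genuine care—rather than a real obstacle—is the passage from continuity of $g$ to the existence of a single finite window $[a,a+\ell)$ controlling oscillation, i.e.\ invoking uniform continuity on the compact metric space and translating it into the cylinder language; everything else is the routine observation that the cylinders are clopen and partition the space.
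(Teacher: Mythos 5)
Your proof is correct and is essentially the same as the paper's: both use uniform continuity of $g$ on the compact space $\co(z)$ to find a window $[a,a+\ell)$ controlling oscillation, then approximate by the function that is constant on each cylinder $U_{w',a}$, taking the value of $g$ at a chosen representative. The paper phrases the cylinder partition as equivalence classes of the relation $w\sim w'$ iff $w[a,a+\ell)=w'[a,a+\ell)$, but this is the identical construction.
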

 \begin{proof}
Fix $\varepsilon>0$ and let $f\in C(\co(z))$. Since $f$ is uniformly continuous, there exist $\ell\geq1$ and $a<0$ such that the diameter of $f(U_{w',a})$ is less than
$\varepsilon$ for any $w'\in\{0,1\}^{\ell}$. Fix such an $a\in\Z$. Let $w_1,\ldots,w_M\in\co(z)$ be the representatives of the
equivalence classes of the relation $\sim$ defined by $w\sim w'$ if and only if $w[a,a+\ell)=w'[a,a+\ell)$.
Let $f'$ be given by the formula
$f'(w)=f(w_j)$,
where $w[a,a+\ell)=w_j[a,a+\ell)$. Then $|f(w)-f'(w)|<\varepsilon$ for any $w\in\co(z)$.
\end{proof}

\begin{Lemma}\label{main} We have
$\frac{1}{N}\sum_{k=1}^Nf(T^kw)\mob(k)\rightarrow 0$ for any $f\in
C(\co(z))$ taking finitely many values and any $w\in\co(z)$.
\end{Lemma}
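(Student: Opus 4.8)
The plan is to exploit the regular Toeplitz structure~(\ref{ole}) of $z$ together with the orthogonality of the M\"obius function to periodic sequences, i.e.\ the fact that for every fixed modulus $q$ and residue $r$ one has $\frac1N\sum_{k\le N,\,k\equiv r\,(q)}\mob(k)\to0$ (a standard consequence of the prime number theorem in arithmetic progressions). Fix $f$ as in the statement, say $f(w)=f(w[a,a+\ell))$, fix $w\in\co(z)$, and fix $\varepsilon>0$. I would choose $n$ so large that $\ell/2^n<\varepsilon$.

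The key structural input is that, at level $n$, the sequence $z$ has period $2^n$ off its ``$?$''-positions, these forming a single residue class modulo $2^n$; consequently every $w\in\co(z)$ has a well-defined phase at level $n$, its own undefined positions occupy a single residue class $h\bmod 2^n$, and $w(i)=w(i+2^n)$ for every determined position $i$. From this I would extract the decisive dichotomy: call a residue $r\in\{0,\ldots,2^n-1\}$ \emph{good} if the set $\{a+r,\ldots,a+r+\ell-1\}$ avoids $h$ modulo $2^n$, and \emph{bad} otherwise. There are at most $\ell$ bad residues. For a good residue $r$, the window $w[a+k,a+k+\ell)$ with $k\equiv r\ (2^n)$ consists only of determined positions and hence, by $2^n$-periodicity, has contents independent of the particular $k$; thus $f(T^kw)$ equals a constant $c_r$ (depending on $w$ and $n$ but not on $k$) along the whole class $k\equiv r$.

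I would then split the average according to these residue classes:
$$
\frac1N\sum_{k=1}^N f(T^kw)\mob(k)=\sum_{r\ \mathrm{good}}c_r\,\frac1N\!\!\sum_{\substack{k\le N\\ k\equiv r\,(2^n)}}\!\!\mob(k)\;+\;\frac1N\sum_{\substack{k\le N\\ k\ \mathrm{bad}}}f(T^kw)\mob(k).
$$
The second (bad) sum is bounded in modulus by $\|f\|_\infty$ times the density of bad $k$, which is at most $\ell/2^n<\varepsilon$ up to an $o(1)$ error, using $|\mob|\le1$. In the first (good) sum there are at most $2^n$ terms, $n$ being fixed, and each inner average tends to $0$ as $N\to\infty$ by the M\"obius orthogonality to periodic sequences quoted above; hence the good sum tends to $0$. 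Letting $N\to\infty$ gives $\limsup_N|\frac1N\sum_{k=1}^N f(T^kw)\mob(k)|\le\|f\|_\infty\,\varepsilon$, and since $\varepsilon>0$ is arbitrary the limit is $0$.

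As for the main obstacle: the analytic heart is entirely classical, packaged in the orthogonality of $\mob$ to periodic sequences, so the real work lies in the structural claim that for each $w\in\co(z)$ the undefined positions at level $n$ form one residue class and the determined values are $2^n$-periodic. This is exactly where regularity of the Toeplitz sequence~(\ref{ole}) is used, and it is what makes the good part of the sum genuinely a finite combination of periodic averages; verifying it carefully — the well-definedness of the phase on the whole orbit closure, not merely on $z$ itself — is the step I would treat most cautiously.
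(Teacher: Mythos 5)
Your proof is correct and follows essentially the same route as the paper: both split $[1,N]$ into residue classes modulo $2^n$, observe that all but at most $\ell$ classes (those whose $\ell$-window meets the unfilled positions of the level-$n$ Toeplitz structure) make $f(T^kw)$ constant along the class, hence a periodic sequence orthogonal to $\mob$ by the prime number theorem in arithmetic progressions, and bound the exceptional classes trivially by their density $\ell/2^n$. The only cosmetic difference is that you state the $2^n$-periodic structure of $w$ globally (one residue class of possibly aperiodic positions), whereas the paper realizes each finite window $w[1,N+1)$ inside $z$ as a block $C'B_ny_1B_n\cdots B_ny_MC''$ and works with that decomposition.
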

\begin{proof}

We can assume that there are $a\in\Z$, $\ell\in\N$ such that the value  $f(w)$ depends only on $w[a,a+\ell)$.

The map $f$ is bounded, say $|f(w)|\le F$ for all $w\in\co(z)$.

Fix $\varepsilon>0$ and let $n$ be a natural number such that
\beq\label{star1}\frac{\ell
F}{2^n}<\frac{\varepsilon}{3}.\eeq

We recall that $\frac1N\sum_{k=1}^Nb_k\mob(k)\to0$ for any
periodic (or eventually periodic with a fixed length of preperiod) sequence $(b_k)\subset \C$
(by the distribution of prime numbers in arithmetic progressions and the fact that the linear space of such sequences is of finite dimension).

Let $M_1$ be a number such that
\beq\label{star2}\frac1N\sum_{k=1}^Nb_k\mob(k)<\frac{\varepsilon}{3\cdot2^n}\eeq
for any periodic sequence $(b_k)$  bounded by $F$ of period at
most $2^n$ and any $N\ge 2^nM_1$. Assume moreover that
$\frac{F}{M_1}<\frac{\varepsilon}{3}$.

Take $N>2^nM_1$, put $M=[N/2^n]$ and let $w\in\co(z)$.  Then $w[1,N+1)=z[i,i+N)$ for
some $i\in \Z$, thus (see~(\ref{ole}))
$$
w[1,N+1)=C'B_ny_1B_ny_2\ldots B_ny_MC'',
$$
where $B_n$ is a $\{0,1\}$-sequence of length $2^n-1$,  $C',C''$
are $\{0,1\}$-sequences such that $C'$ is a suffix of $B_ny_0$, $C''$ is a prefix of $B_n$ and
$y_1,\ldots,y_M\in\{0,1\}$. Let $c',c''$ denote the length of
$C',C''$, respectively.

Then $\sum_{k=1}^Nf(T^kw)\mob(k)$ equals
{\scriptsize $$
\begin{array}{l}
f(Tw)\mob(1)+f(T^2w)\mob(2)+\ldots+f(T^{c'}w)\mob(c')+\\
f(T^{c'+1}w)\mob(c'+1)+f(T^{c'+2^n+1}w)\mob(c'+2^n+1)+\ldots+f(T^{c'+(M-1)2^n+1}w)\mob(c'+(M-1)2^n+1)+\\
f(T^{c'+2}w)\mob(c'+2)+f(T^{c'+2^n+2}w)\mob(c'+2^n+2)+\ldots+f(T^{c'+(M-1)2^n+2}w)\mob(c'+(M-1)2^n+2)+\\
......\\
f(T^{c'+2^n}w)\mob(c'+2^n)+f(T^{c'+2\cdot2^n}w)\mob(c'+2\cdot2^n)+\ldots+f(T^{c'+M2^n}w)\mob(c'+M2^n)+\\
f(T^{c'+M2^n+1}w)\mob(c'+M2^n+1)+f(T^{c'+M2^n+2}w)\mob(c'+M2^n+2)+\ldots+f(T^{c'+M2^n+c''}w)\mob(c'+M2^n+c'')
\end{array}
$$}
Having organized the sum into $2^n+2$ rows as above, we denote by
$E'$ the sum in the first row, by $E''$ the sum in the last one
and by $\Sigma_i$ the sum in the $i+1$-st row for
$i=1,\ldots,2^n$.

Observe that \beq\label{star3} |E'+E''|\le 2^nF. \eeq
 Moreover, $\Sigma_i$ is an
expression of the form $\Sigma_{k=1}^Nb_k\mob(k)$, where $(b_k)$ is
a periodic sequence,\footnote{The periodic sequence $(b_k)$ has
form $(0,...,0,\phi,0,...,0,\phi,0,...)$ and period $2^n$, where
$\phi$ is a fixed value of $f$.} provided the segment
$w[c'+i,c'+i+\ell)$ does not meet any "uncertain" entry $y_j$. The
latest condition holds for $i\le 2^n-\ell$.

It follows by (\ref{star2}) that
$$
|\Sigma_i|\le \frac{N\varepsilon}{3\cdot2^n}
$$
for $i=1,\ldots,2^n-\ell$.

For the remaining $i $'s we have \beq\label{star4} |\Sigma_i|\le
MF. \eeq Finally, taking into account (\ref{star1}), (\ref{star3}),
(\ref{star4}) and $\frac{F}{M_1}<\frac{\varepsilon}{3}$, we get
 $$
\left|\frac1N
\sum_{k=1}^Nf(T^kw)\mob(k)\right|<\frac{2^nF}{N}+\frac{2^n\varepsilon}{3\cdot2^n}+\frac{MF}{N}\le
\varepsilon.
$$
\end{proof}

\begin{Prop}\label{TS}
$\frac1N\sum_{k=1}^Nf(T^kw)\mob(k)\rightarrow 0$ for any $f\in
C(\co(z))$ and any $w\in\co(z)$.
\end{Prop}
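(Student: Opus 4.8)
The plan is to deduce this directly from the two preceding lemmas by a routine density argument, exploiting that the linear functionals
$$
f\longmapsto \frac1N\sum_{k=1}^Nf(T^kw)\mob(k)
$$
are uniformly bounded on $C(\co(z))$. The key observation is simply that $|\mob(k)|\leq1$ for every $k$, so each of these functionals has operator norm at most $1$ with respect to the supremum norm, uniformly in $N$ and in $w\in\co(z)$. This is what makes supremum-norm approximation of the observable $f$ compatible with the $\mob$-twisted averages.

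Concretely, I would proceed as follows. Fix $f\in C(\co(z))$ and $\eps>0$. By Lemma~\ref{dense} there is a function $g\colon\co(z)\to\C$ taking finitely many values with $\|f-g\|_\infty<\eps$. Then, for every $N$ and every $w\in\co(z)$, the triangle inequality together with $|\mob(k)|\le 1$ gives
$$
\left|\frac1N\sum_{k=1}^Nf(T^kw)\mob(k)-\frac1N\sum_{k=1}^Ng(T^kw)\mob(k)\right|
\le\frac1N\sum_{k=1}^N\bigl|f(T^kw)-g(T^kw)\bigr|\,|\mob(k)|\le\|f-g\|_\infty<\eps .
$$

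Next I would pass to the limit. By Lemma~\ref{main}, applied to the finitely-valued function $g$, the second average tends to $0$ as $N\to\infty$. Taking $\limsup_{N\to\infty}$ in the displayed inequality therefore yields
$$
\limsup_{N\to\infty}\left|\frac1N\sum_{k=1}^Nf(T^kw)\mob(k)\right|\le\eps .
$$
Since $\eps>0$ was arbitrary, the left-hand side equals $0$, which is exactly the assertion of Proposition~\ref{TS}. I do not expect any genuine obstacle at this stage: the entire substance of the statement is carried by Lemma~\ref{main} (and, underneath it, by the Toeplitz block structure~(\ref{ole}) and the orthogonality of periodic sequences to $\mob$), while the passage from finitely-valued functions to all of $C(\co(z))$ is the standard $\eps$-approximation made legitimate solely by the boundedness $|\mob|\le1$ of the M\"obius function.
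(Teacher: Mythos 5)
Your proof is correct and follows essentially the same route as the paper, whose proof of Proposition~\ref{TS} is exactly the density argument you spell out: approximate $f$ uniformly by a finitely-valued continuous function (Lemma~\ref{dense}), apply Lemma~\ref{main} to that approximant, and control the error term using only $|\mob(k)|\le 1$. The one cosmetic difference is that the paper's one-line proof also cites the fact $\frac1N\sum_{k=1}^N\mob(k)\to0$, which your argument shows is not needed at this stage (any use of it is already absorbed inside Lemma~\ref{main}, where periodic sequences --- including constant ones --- are handled).
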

\begin{proof} Follows  by Lemma~\ref{main} and Lemma~\ref{dense} and the fact that $\frac1N\sum_{k=1}^N\mob(k)\to0$.
\end{proof}

\section{On Sarnak's conjecture  for  Toeplitz dynamical systems}\label{ostatnia}
We will consider sequences of a finite alphabet $A=\{0,\ldots,d-1\}$, for some $d\geq2$. Following \cite{Ja-Ke}, a sequence $z\in A^{\N}$ is called a Toeplitz sequence if
for each $n\geq 0$ there is $a_n\geq1$ such that  $z(n)=z(n+a_n)=z(n+2a_n)=\ldots$ It can be proved \cite{Ja-Ke}, \cite{Wi} that then there is an increasing sequence $(p_n)$, $p_n|p_{n+1}$ such that for each $n\geq1$, $z$ can be represented as
$z=C_nC_n\ldots$ with $C_n$ being a block over the alphabet $A\cup\{?\}$, where the sign  ``?'' means an ``unfilled''  place (at the $n$th step of the construction of $z$) and $|C_n|=p_n$, $n\geq1$. Recall that a Toeplitz sequence $z$ is called {\em regular} if
$$
(\mbox{the number of unfilled places in}\;C_n)/p_n\to 0\;\;\mbox{when}\; n\to\infty.$$
The dynamical system generated by a regular Toeplitz sequence is uniquely ergodic and has zero entropy \cite{Ja-Ke}, \cite{Wi} (see also \cite{Do}). Moreover, dynamical systems generated by
Toeplitz sequences are so called almost 1-1 extensions of their maximal equicontinuous factors which are $(p_n)$-odometers.

Now, the same method which has been used to show that Sarnak's conjecture holds for the Thue-Toeplitz sequence shows the following.

\begin{Prop}\label{ts} Let $z$ be any regular Toeplitz sequence. Then the dynamical systems $(\co(z),T)$ determined by $z$ satisfies~(\ref{s1}).\end{Prop}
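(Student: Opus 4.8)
The plan is to repeat, almost verbatim, the argument that proved Sarnak's conjecture for the Thue--Toeplitz sequence, i.e.\ Lemma~\ref{main} combined with Lemma~\ref{dense} and Proposition~\ref{TS}; the only place that needs adjusting is the step where the ``uncertain'' entries were counted. First I would observe that Lemma~\ref{dense} and Proposition~\ref{TS} carry over with no change: in any subshift $\co(z)\subset A^{\Z}$ over a finite alphabet the functions depending on a single finite window $w[a,a+\ell)$ are dense in $C(\co(z))$ (uniform continuity plus compactness), and, granting the finite-valued case together with $\frac1N\sum_{k=1}^N\mob(k)\to0$, the general case of~(\ref{s1}) follows by the same approximation. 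Thus everything reduces to proving the analogue of Lemma~\ref{main}: for every $f$ depending only on $w[a,a+\ell)$ and every $w\in\co(z)$ one has $\frac1N\sum_{k=1}^N f(T^kw)\mob(k)\to0$.

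To prove this analogue I would use the representation $z=C_nC_n\ldots$ with $|C_n|=p_n$ and $C_n$ a word over $A\cup\{?\}$, the ``?''-positions being those left unfilled at the $n$-th step; write $u_n$ for their number, so that regularity is exactly the statement $u_n/p_n\to0$. Since $w\in\co(z)$, the block $w[a+1,a+N+\ell)$ occurs in $z$, hence equals $z[i,i+N+\ell-1)$ for some $i$, so $f(T^kw)$ is determined by the positions $i+k-1,\dots,i+k+\ell-2$ of $z$. I would then split $\sum_{k=1}^N f(T^kw)\mob(k)$ according to the residue class of $k$ modulo $p_n$, exactly as the Thue--Toeplitz sum was split into the rows $\Sigma_i$. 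Call a residue class \emph{good} if along it the whole window of length $\ell$ lands on positions already filled at level $n$, and \emph{bad} otherwise. Because the value of $z$ on filled positions depends only on the residue modulo $p_n$, along a good class the sequence $k\mapsto f(T^kw)$ is constant, so the corresponding row is of the form $\sum_k b_k\mob(k)$ with $(b_k)$ periodic of period $p_n$, and the estimate~(\ref{star2}) applies; such a row is $o(N)$. A class is bad only if its window starts within $\ell$ positions to the left of some ``?''-position, so there are at most $\ell u_n$ bad classes.

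With $M=[N/p_n]$, each bad row is crudely bounded by $(M+1)F$, as in~(\ref{star4}), so all bad rows together contribute at most $\ell u_n(M+1)F$, while the incomplete periods at the two ends contribute only $O(p_nF)$, as in~(\ref{star3}). Dividing by $N\ge p_nM$, the bad part is bounded by $\ell\,(u_n/p_n)\,F\,(1+1/M)$ plus the $o(1)$ boundary term. Hence, given $\eps>0$, I would first fix $n$ so large that $\ell(u_n/p_n)F<\eps/4$ (possible by regularity); then, $n$ being fixed, there are only finitely many residue classes, and I let $N\to\infty$, which makes $(1+1/M)<2$, kills the $O(p_nF/N)$ boundary term, and drives the sum of the finitely many good rows (each $o(N)$) below $\tfrac{\eps}{2}N$. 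This gives $\limsup_N\bigl|\frac1N\sum_{k=1}^N f(T^kw)\mob(k)\bigr|\le\eps$, and letting $\eps\to0$ finishes the analogue of Lemma~\ref{main}, hence the proposition.

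The step I expect to require the most care is the counting of bad residue classes and the verification that the resulting estimate is uniform in $N$: one must check that a window of length $\ell$ can meet an unfilled place for only $O(\ell u_n)$ residues modulo $p_n$, and that the crude bound on these rows, after division by $N$, is governed solely by the ratio $u_n/p_n$. This is precisely where the Thue--Toeplitz computation used that each $C_n$ had a single ``?'' per period of length $2^n$ (see~(\ref{ole})), and where, in the general case, the defining regularity hypothesis $u_n/p_n\to0$ enters.
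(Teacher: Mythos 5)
Your proposal is correct and is essentially the paper's own argument: the paper proves Proposition~\ref{ts} merely by asserting that ``the same method'' used for the Thue--Toeplitz sequence (Lemmas~\ref{dense} and~\ref{main}, Proposition~\ref{TS}) applies, and your write-up supplies exactly that adaptation --- splitting $\sum_{k\le N}f(T^kw)\mob(k)$ into residue classes modulo $p_n$, applying the prime-number-theorem-in-progressions estimate on the classes whose window of length $\ell$ meets only filled places, and bounding the at most $\ell u_n$ bad classes crudely. Your observation that the single-``?''-per-period structure of~(\ref{ole}) must be replaced by the regularity hypothesis $u_n/p_n\to 0$ is precisely the one point the paper leaves to the reader, and your quantifier order (first fix $n$ by regularity, then let $N\to\infty$) handles it correctly.
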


We will now show however that if the regularity assumption is dropped, Sarnak's conjecture may fail as the following construction shows.

Let $(a_n)_{n\in\N}$ be an increasing sequence of natural numbers such that $a_n|a_{n+1}$ and
\beq\label{an}
\rho:=\sum\limits_{n=1}^{\infty}\frac{1}{a_n}\le \frac14.
\eeq
(For instance, $a_n=5^n$.)

Let us denote $\N_0=\{0,1,2,...\}$.  We construct a Toeplitz
sequence $z=(z(n))_{n\in\N_0}$ (with terms 0, $\pm1$) as follows:
at the first stage of the construction we put $\mob(0):=0$ at each
$a_1$th place of $z$, starting form 0. At the second stage we find
the first unfilled place (in fact, it is the place number 1) and
then we put $\mob(1)$ at each $a_{2}$th of the unfilled places. We
proceed this way inductively: at the $n$th stage we find the first
unfilled place, say $m$, and we put $\mob(m)$ at each $a_{n+1}$th
unfilled place.

More precisely,
we define by induction subsets $A_n\subset \N_0$, $n\in\N_0$ with
the properties:
\begin{enumerate}
\item $n\in A_n$,
\item if $A_n\neq A_m$ then $A_n\cap A_m=\emptyset$,
\item $A_n$ is an arithmetic progression $\{m,m+a_{m+1},m+2a_{m+1},...,m+ka_{m+1},...\}$, where $m$ is the smallest element of $A_n$.
\end{enumerate}

We put $A_0=\{ka_1:k\in\N_0\}$ .

Assume that $n>0$ and $A_0,...,A_{n-1}$ are defined and satisfy
the above conditions. If $n\in A_m$ for some $m<n$, then we put
$A_n=A_m$ (by the property 2 the definition does not depend on
$m$). If $n\notin A_0\cup...\cup A_{n-1}$ we put
$A_n=\{n+ka_{n+1}:k\in\N_0\}$. We need to show that $A_n\cap
A_m=\emptyset$ for $m<n$ in the latter case. Indeed, assume that
$n+ka_{n+1}\in A_m$ for some $k\in\N_0$, where $m<n$. Assume that
$m$ is the smallest number with that property.  Then, by the
property 3,  $n+ka_{n+1}=m+la_{m+1}$ for some $l\in\N_0$ and
$n=m+(l-k\frac{a_{n+1}}{a_{m+1}})a_{m+1}$ hence $n\in A_m$.

We call the smallest element of $A_m$ the {\em initial of} $A_m$.
By an {\em initial} we call the initial of some $A_m$.


Now we define the sequence $z=(z(n))_{n\in\N_0}$ by $z(n):=\mob(m)$,
where $m$ is the initial of $A_n$. Thanks to the property~3, $z$
is a Toeplitz sequence.



\begin{Prop}
$$
\liminf\limits_{N\rightarrow\infty}\frac{1}{N}\sum\limits_{k=1}^Nz(k)\mob(k)\ge
\frac{6}{\pi^2}-2\rho>0.
$$
\end{Prop}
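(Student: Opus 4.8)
The plan is to split $\sum_{k=1}^N z(k)\mob(k)$ according to whether the index $k$ is an \emph{initial}. By the very definition of $z$, for an initial $k$ one has $z(k)=\mob(k)$ (the initial of $A_k$ is then $k$ itself), so the initials contribute exactly $\sum_{k\le N,\ k\text{ initial}}\mob(k)^2$, i.e.\ a partial sum of the indicator of the square-free integers; this is non-negative and its unrestricted partial sums have density $6/\pi^2$. Every remaining term $z(k)\mob(k)$ is bounded by $1$ in absolute value, since $z$ takes values in $\{0,\pm1\}$. Thus the whole statement follows once we know that the non-initials are sparse, namely that there are at most $\rho N$ of them below $N$.

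The key step is therefore the counting estimate
$$
\#\{1\le k\le N:\ k\text{ is not an initial}\}\ \le\ \rho N .
$$
Here I would use the partition properties (1)--(3): every non-initial $k$ lies in a unique $A_m$ whose initial $m$ satisfies $m<k$, and by property (3) it is one of the points $m+a_{m+1},m+2a_{m+1},\dots$ of the progression $A_m$. Hence the number of non-initials up to $N$ equals $\sum_{m\text{ initial}}\#\{k\le N:\ k=m+ja_{m+1},\ j\ge1\}$, and for each initial $m$ this inner count is $\lfloor (N-m)/a_{m+1}\rfloor\le N/a_{m+1}$. Dropping the restriction to initials only enlarges the sum, so it is at most $N\sum_{m=0}^{N-1}1/a_{m+1}=N\sum_{j=1}^{N}1/a_j\le \rho N$ by~(\ref{an}).

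To assemble the bound, write $Q(N)=\sum_{k\le N}\mob(k)^2=\tfrac{6}{\pi^2}N+O(\sqrt N)$ for the counting function of the square-free integers. The initials contribute
$$
\sum_{k\le N,\ k\text{ initial}}\mob(k)^2=Q(N)-\sum_{k\le N,\ k\text{ non-initial}}\mob(k)^2\ \ge\ \tfrac{6}{\pi^2}N+o(N)-\rho N,
$$
the last inequality using $\mob(k)^2\le1$ together with the counting estimate applied to the square-free non-initials. The non-initials contribute at least $-\rho N$, by the trivial bound $|z(k)\mob(k)|\le1$ and the same estimate. Adding the two and dividing by $N$ gives $\tfrac1N\sum_{k=1}^N z(k)\mob(k)\ge \tfrac{6}{\pi^2}+o(1)-2\rho$, whence the liminf is at least $6/\pi^2-2\rho$; as $\rho\le\tfrac14$, this exceeds $6/\pi^2-\tfrac12>0$.

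The only genuine obstacle is the counting estimate, and its whole force comes from the fact that the progressions used at the successive stages have total density $\sum_j 1/a_j=\rho$ — exactly the feature that makes $z$ non-regular yet keeps the ``off-diagonal'' indices $k$ (where $z(k)$ decouples from $\mob(k)$) scarce. Everything else is the classical density of the square-free integers and the trivial $\ell^\infty$ bound on $z$.
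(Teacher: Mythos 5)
Your proposal is correct and follows essentially the same route as the paper: split the sum into initials (where $z(k)=\mob(k)$, contributing $\mob(k)^2$) and non-initials, bound the number of non-initials up to $N$ by $\rho N$ using the disjoint arithmetic progressions $A_m$ with differences $a_{m+1}$, and conclude via the density $6/\pi^2$ of square-free integers together with the trivial bound $|z(k)\mob(k)|\le 1$, losing $\rho N$ twice. Your write-up is in fact slightly more explicit than the paper's in the counting step (the exact decomposition over initials $m<N$), but the argument is the same.
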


\begin{proof}

If $k$ is an initial then $z(k)=\mob(k)$. We find a bound of the
number of non-initials $k\in[1,N]$. If $k\in[1,N]$ is not an
initial, then $k\in A_k=A_m$, where  $m<k\le N$, $m$ is the
initial of $A_k$  and $a_{m+1}<N$. Clearly, $|[1,N]\cap \big(A_m\setminus\{m\}\big)|\le
\frac{N}{a_{m+1}}$, since $A_m$ is an arithmetic progression with
the difference $a_{m+1}$. Thus the number of non-initials in
$[1,N]$ is less than
$$
N(\frac{1}{a_1}+...+\frac{1}{a_{k}}+...)<N\rho.\footnote{The frequency of non-initials in $[1,N]$ does not tend to 1 - this is related with the fact that $z$ is not regular.}
$$

Since $|z(n)\mob(n)|\le 1$ for $n\in\N$ and $z(k)=\mob(k)$ for any
initial  $k$, it follows that \beq\label{mu2}
\sum\limits_{k=1}^Nz(k)\mob(k)\ge \sum\limits_{k=1}^N\mob(k)^2 -
2N\rho. \eeq


We have $\mob(k)^2=1$ for square-free $k$, hence by~(\ref{mu2}),  we
obtain \beq \sum\limits_{k=1}^Nz(k)\mob(k)\ge |\{k\in[1,N]:
k\;\;\mbox{\rm square-free}\}|-2N\rho. \eeq

The frequency of square-free numbers in $[1,N]$ tends to
$\frac{6}{\pi^2}$, thus
$$
\liminf\limits_{N\rightarrow\infty}\frac{1}{N}\sum\limits_{k=1}^Nz(k)\mob(k)\ge
\frac{6}{\pi^2}-2\rho
$$
and $\frac{6}{\pi^2}-2\rho>0$ by (\ref{an}).

\end{proof}

\begin{Remark}\em It follows that
Sarnak's conjecture does not hold for the topological dynamical system $(\co(z),T)$  associated to $z$. It can also be proved that the topological entropy of the associated dynamical system is positive.

\end{Remark}

\section{Generalized Morse sequences having a stabilizing Thue-Morse subsequence}\label{ostatnia1} We now come back to the cocycle representation of Morse dynamical systems, to see how to obtain more systems for which Sarnak's conjecture holds.

Let $y=b^0\times b^1\times\ldots $  be a (generalized) Morse sequence.
Fix $s\geq 1$ and then $k,K\geq1$ (we think of $K$ as going to $\infty$) and write
\beq\label{mor1}
y=B\times C\times b^{k+K}\times b^{k+K+1}\times\ldots, \eeq
where $B=b^0\times\ldots\times b^{k-1}$, $C=b^k\times\ldots\times b^{k+K-1}$.
Notice that this operation of putting parentheses does not change the space $\co(y)$, in particular, the number (cf.\ (\ref{ms2}))
\beq\label{mor2}
\int_{\co(y)}(-1)^{u(s|B|)-u(0)}\,d\mu_y(u)=
\int_{\co(y)}(-1)^{u(sq_k)-u(0)}\,d\mu_y(u)=\widehat{\sigma}_y(sq_k)\eeq
is not changed compared to the original representation of $y$. However, introducing parentheses does change the cocycle representation of the corresponding dynamical system (the underlying odometer has a different algebraic representation, and the Morse cocycle also changes). We now provide some details (see \cite{Le}) concerning the first two steps in the definition of the odometer and the Morse cocycle.

To the length $|B|$ of $B:=(b_0,b_1,\ldots,b_{|B|-1})$ there corresponds a partition $\cd^{(1)}=\{D^{(1)}_0,\ldots, D^{(1)}_{|B|-1}\}$ for which $SD^{(1)}_i=D^{(1)}_{i+1}$ modulo~$|B|$. At this stage, the Morse cocycle $\phi:Y\to\{0,1\}$ is defined as being constant on each $D^{(1)}_i$, $i=0,1,\ldots,|B|-2$ with
$$
\phi|_{D^{(1)}_{i}}=b_i+b_{i+1}\;\;(\mbox{modulo}~2),\;i=0,1,\ldots,|B|-2, $$
i.e.\ we set the successive values of the block $\check{B}:=(b_0+b_1)(b_1+b_2)\ldots(b_{|B|-2}+b_{|B|-1})$ as the values of $\phi$ on $D^{(1)}_{0},\ldots,D^{(1)}_{|B|-2}$,
and $\phi$ is not defined on $D^{(1)}_{|B|-1}$ (modulo~$2$). Also, denote $\Sigma:=\check{b}_0+\ldots+\check{b}_{|B|-2}$. At the next step, the tower $\cd^{(1)}$ is refined to a tower $\cd^{(2)}$ in the following way: We divide the base $D^{(1)}_0$ into $|C|$ equal pieces, which yields the partition of $\cd^{(1)}$ into columns and obtain a new tower $\cd^{(2)}$ (with $|B|\cdot|C|$ levels) in which
$$ SD^{(2)}_i=D^{(2)}_{i+1}\;\;\mbox{modulo}~|B|\cdot|C|\;\;\mbox{and}\;\; D^{(1)}_{|B|-1}=\bigcup_{j=1}^{|C|}D^{(2)}_{j|B|-1}.$$
We need to define $\phi$ on the levels of $\cd^{(2)}$ (a constant values on each level) on which $\phi$ is not defined yet, except of $D^{(2)}_{|C||B|-1}$ (the values of $\phi$ for $u\in D^{(2)}_{|C||B|-1}$ are settled in the following steps and depend successively on $b^{k+K}$, $b^{k+K+1}$, etc., this will not be relevant for our purposes). So for the top level $D^{(1)}_{|B|-1}=\bigcup_{j=1}^{|C|}D^{(2)}_{j|B|-1}$ we set $\check{C}+b_{|B|-1}$ as the consecutive values of $\phi$ (remembering that $b_0=0$), that is
\beq\label{mor3}
\phi|_{D^{(2)}_{j|B|-1}}=c_{j-1}+c_{j}+b_{|B|-1}\;\;\mbox{for}\;\;j=1,\ldots,|C|-1.\eeq
We now define a (partial) function $f_{s,C}$ in the following way:
$$
f_{s,C}|_{D^{(2)}_{j|B|-1}}=\check{c}_{j-1}+\check{c}_j+\ldots+\check{c}_{j+s-1}+sb_{|B|-1}+s\Sigma$$
for $j=1,\ldots, |C|-s$ and then we spread this constant value down the column for which $D^{(2)}_{j|B|-1}$ is the top level. It follows that the function $f_{s,C}$ is not defined on the last $s$ columns of the tower $\cd^{(1)}$, that is, on a set of measure $s/|C|$.  Moreover, the value
\beq\label{mor4}
\mbox{ $\big|\int_{Y}(-1)^{f_{s,C}(u)}\,d\nu(u)\big|$ (defined up to $\pm s/|C|$) does not depend on $B$}\eeq
and
\beq\label{mor5}
\left|\big| \int_{Y}(-1)^{\phi^{(s|B||C|)}(u)}\,d\nu(u)\big|-
\big|\int_{Y}(-1)^{f_{s,C}(u)}\,d\nu(u)\big|\right|<s/|C|.\eeq

Come back now to the Thue-Morse sequence $x=01\times01\times\ldots$, use~(\ref{ms2}) and~(\ref{ms3}) and apply the above to obtain
$$
\left|\big| \int_{Y}(-1)^{\phi^{(s\cdot 2^{k+K})}(u)}\,d\nu(u)\big|-
\big|\int_{Y}(-1)^{f_{s,C}(u)}\,d\nu(u)\big|\right|<s/2^K.$$
Taking into account that $\widehat{\sigma}(s)=\widehat{\sigma}(s\cdot 2^{\ell})$, as a conclusion of this, we obtain that for the Thue-Morse sequence we have
\beq\label{mor7} \left|
\int_{Y}(-1)^{f_{s,C}}\,d\nu\right|\tend{|C|}{\infty}|\widehat{\sigma}(s)|.\eeq

Now, we borrow the idea of stabilizing subsequence from \cite{Ab-Le-Ru}. We recall that a (generalized) Morse sequence $y=b^0\times b^1\times\ldots$ is of Thue-Morse type if it has a stabilizing Thue-Morse subsequence, therefore there exists a subsequence $k_1<k_2<\ldots$ such that for each $K\geq1$
$$
(b^{k_i},b^{k_i+1},\ldots,b^{k_i+K-1})=(01,01,\ldots,01)\;\;\mbox{eventually in $i$}.$$
Denote by $\eta$ the spectral measure of $y$. Using (\ref{ms3}), (\ref{mor4}), (\ref{mor5}) and~(\ref{mor7}) we obtain that for each $i\geq1$ and $K\geq1$ (sufficiently large)
$$
\big||\widehat{\eta}(sq_{Kk_i})|-|\widehat{\sigma}(s)|\big|<s/2^K.$$
Assume that $r\neq s$ are odd numbers. Then, in view of Lemma~\ref{lmt7}, without loss of generality we can assume that $|\widehat{\sigma}(r)|\neq|\widehat{\sigma}(s)|$. By the above, we can indicate easily a subsequence  $(L_i)$ such that
$$\left|\widehat{\eta}(sq_{L_i})|\to|\widehat{\sigma}(s)\right|\;\;\mbox{and}\;\;|
\widehat{\eta}(rq_{L_i})|\to|\widehat{\sigma}(r)|.$$
Applying Corollary~\ref{cwo}, we obtain $\widetilde{\eta}^{(s)}\perp\widetilde{\eta}^{(r)}$
and, moreover, Remark~\ref{r1} applies for $y$.
In view of Proposition~\ref{ts}, we have proved the following result.

\begin{Cor}\label{cs} Assume that $y=b^0\times b^1\times\ldots$ is a generalized Morse sequence of the Thue-Morse type. Then Sarnak's conjecture holds for the corresponding Morse dynamical system.\end{Cor}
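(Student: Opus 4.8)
The plan is to combine the spectral disjointness already established for the Thue-Morse measure with the Sarnak result for regular Toeplitz systems, using the cocycle representation to reduce the general Thue-Morse type case to these two ingredients. Given $y = b^0 \times b^1 \times \ldots$ of Thue-Morse type, I would first decompose an arbitrary $f \in C(\co(y))$ according to the involution $\tau$ interchanging $0$s and $1$s, exactly as in~(\ref{sa1}), writing $f = f_t + f_m$ with $f_t \circ \tau = f_t$ and $f_m \circ \tau = -f_m$. The function $f_t$ descends to the maximal odometer factor, while $f_m$ lies in the ``anti-invariant'' part of $L^2$ whose spectral measure is absolutely continuous with respect to the continuous part $\widetilde{\eta}$ of the spectral type of the Morse system.

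The key step is to show that $\widetilde{\eta}^{(r)} \perp \widetilde{\eta}^{(s)}$ for distinct odd $r,s$, which is precisely what the displayed computation preceding this Corollary achieves: using the stabilizing Thue-Morse subsequence $k_1 < k_2 < \ldots$, the cocycle estimates~(\ref{mor4})--(\ref{mor5}) transfer the asymptotic behaviour of $\int_Y (-1)^{f_{s,C}}\,d\nu$ to the genuine Fourier coefficients $\widehat{\eta}(sq_{Kk_i})$, and~(\ref{mor7}) identifies the limit with $|\widehat{\sigma}(s)|$. By Lemma~\ref{lmt7} one may assume $|\widehat{\sigma}(r)| \neq |\widehat{\sigma}(s)|$ after choosing an appropriate multiplier $t$, so Corollary~\ref{cwo} (in the strengthened form of Remark~\ref{r}, invoked through Remark~\ref{r1}) yields $\widetilde{\eta}^{(r)} \perp \widetilde{\eta}^{(s)}$. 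With this spectral disjointness in hand, the observation borrowed from~\cite{Ab-Le-Ru} shows that for each $y \in \co(y)$ the averages $\frac1N \sum_{n=1}^N f_m(T^n y)\mob(n)$ tend to $0$, since M\"obius orthogonality is inherited along the continuous spectral component once the powers of the measure separate.

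It then remains to handle $f_t$. Because $f_t$ is $\tau$-invariant, it factors through the topological factor of $(\co(y),T)$ obtained by identifying $u$ with $\tau u$; this factor is the almost $1$-$1$ extension of the $(q_n)$-odometer given by an associated Toeplitz sequence, exactly as the Thue-Toeplitz sequence arose from the classical Thue-Morse system. Since $y$ is a \emph{generalized} Morse sequence (the blocks $b^i$ being of bounded complexity in the strictly ergodic regime), this Toeplitz sequence is regular, so Proposition~\ref{ts} applies and gives $\frac1N \sum_{k=1}^N f_t(T^k w)\mob(k) \to 0$ for every $w$. Adding the two contributions and appealing to linearity completes the verification of~(\ref{s1}).

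**The main obstacle** I anticipate is the identification of the $\tau$-invariant factor with a \emph{regular} Toeplitz system: one must verify that the Toeplitz sequence produced from the Morse blocks $b^0, b^1, \ldots$ of a Thue-Morse type $y$ inherits regularity from the structure of the stabilizing subsequence. The presence of arbitrarily long runs of $01$ blocks is what forces the density of unfilled places in the $C_n$ to vanish, but checking this quantitatively---rather than merely for the classical case where all blocks equal $01$---requires care, and it is the point where the Thue-Morse type hypothesis is genuinely used beyond the spectral argument. Once regularity is confirmed, the remainder is a routine assembly of Proposition~\ref{ts}, the spectral disjointness, and the decomposition~(\ref{sa1}).
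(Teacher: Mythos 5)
Your proposal follows the paper's own argument step for step: the $\tau$-decomposition $f=f_t+f_m$ of~(\ref{sa1}), the transfer of Fourier coefficients along the stabilizing subsequence via~(\ref{mor4})--(\ref{mor7}), the use of Lemma~\ref{lmt7} to arrange $|\widehat{\sigma}(rt)|\neq|\widehat{\sigma}(st)|$, Corollary~\ref{cwo} with Remarks~\ref{r} and~\ref{r1} for the disjointness $\widetilde{\eta}^{(r)}\perp\widetilde{\eta}^{(s)}$, the criterion of \cite{Ab-Le-Ru} for $f_m$, and Proposition~\ref{ts} for $f_t$.

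The one thing to correct is your closing paragraph: the ``main obstacle'' you single out is not an obstacle, and the mechanism you propose for it is wrong. Regularity of the Toeplitz factor has nothing to do with the Thue-Morse type hypothesis (and ``bounded complexity of the blocks in the strictly ergodic regime'' is not an argument). For an \emph{arbitrary} generalized Morse sequence $y=b^0\times b^1\times\ldots$, the $\tau$-quotient of $(\co(y),T)$ is the Toeplitz system of the sequence $\check{y}(i)=y(i)+y(i+1)\bmod 2$ (see \cite{Le}; for Thue-Morse this is the footnote to~(\ref{ole})). Writing $y(jq_n+i)=y_n(i)+y^{(n)}(j)\bmod 2$, where $y_n=b^0\times\cdots\times b^{n-1}$, $|y_n|=q_n$ and $y^{(n)}=b^n\times b^{n+1}\times\cdots$, one sees that $\check{y}(jq_n+i)=y_n(i)+y_n(i+1)$ is independent of $j$ for every $0\le i\le q_n-2$; hence at stage $n$ the only unfilled positions are those congruent to $q_n-1$ modulo $q_n$, so their density is $1/q_n\le 2^{-n}\to 0$. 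This is also visible in the cocycle picture of Section~\ref{ostatnia1}, where at each stage $\phi$ is undefined on exactly one level $D^{(1)}_{|B|-1}$ of the tower. Thus \emph{every} generalized Morse sequence has a regular Toeplitz factor, and long runs of $01$ play no role in the vanishing of the density of unfilled places; the Thue-Morse type hypothesis is used \emph{only} in the spectral step, namely to force $|\widehat{\eta}(sq_{L_i})|\to|\widehat{\sigma}(s)|$ so that the disjointness of powers can be imported from the classical Thue-Morse measure. With this correction your argument is complete and coincides with the paper's.
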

\scriptsize

\scriptsize

\noindent El Houcein El Abdalaoui:\\
Laboratoire de Math\'ematiques Rapha\"el Salem,\\
Universit\'e de Rouen, CNRS --
Avenue de l'Universit\'e --
76801 Saint \'Etienne du Rouvray, France\\
elhoucein.elabdalaoui@univ-rouen.fr

\vspace{2ex}

\noindent Stanis\l aw Kasjan, Mariusz Lema\'nczyk:\\ Faculty of Mathematics and Computer Science, Nicolaus Copernicus University, 12/18 Chopin street, 87-100 Toru\'{n}, Poland\\
skasjan@mat.umk.pl\\mlem@mat.umk.pl
\end{document}